\def\doctype{}
\newcommand\cX{\mathfrak{X}}
\newcommand\vj{\mathbf{1}}
\newcommand\vb{\mathbf{b}}
\newcommand\vx{\mathbf{x}}
\newcommand\vy{\mathbf{y}}
\newcommand\vz{\mathbf{z}}
\newcommand\vo{\mathbf{0}}
\newcommand\C{\mathbb{C}}
\newcommand\R{\mathbb{R}}
\newcommand{\comment}[1]{}
\numberwithin{equation}{section}
\let\oldsection\section
\newcommand\boldsection[1]{\oldsection{\bf #1}}
\newcommand\starsection[1]{\oldsection*{\bf #1}}
\renewcommand\section{\@ifstar\starsection\boldsection}
\newtheoremstyle{theorem}
  {12pt}		  
  {0pt}  
  {\sl}  
  {\parindent}     
  {\bf}  
  {. }    
  { }    
  {}     
\theoremstyle{theorem}
\newtheorem{thm}{Theorem}[section]  
\newtheorem{lemma}[thm]{Lemma}     
\newtheorem{cor}[thm]{Corollary}
\newtheorem{prop}[thm]{Proposition}
\newtheoremstyle{definition}
  {12pt}		  
  {0pt}  
  {}  
  {\parindent}     
  {\bf}  
  {. }    
  { }    
  {}     
\theoremstyle{definition}
\newtheorem{ex}[thm]{Example}
\newcommand\rk{{\sc Remark.} }
\renewcommand{\proofname}{Proof}
\renewenvironment{proof}[1][\proofname]{\par
  \pushQED{\qed}%
  \normalfont \partopsep=\z@skip \topsep=\z@skip
  \trivlist
  \item[\hskip\labelsep
        \scshape
    #1\@addpunct{.}]\ignorespaces
}{%
  \popQED\endtrivlist\@endpefalse
}
\renewcommand*\@maketitle{%
  \normalfont\normalsize
  \@adminfootnotes
  \@mkboth{\@nx\shortauthors}{\@nx\shorttitle}%
  \global\topskip42\p@\relax 
  \@settitle
  \ifx\@empty\authors \else {\vskip 1em
\vtop{\centering\shortauthors\@@par}} \fi
  \ifx\@empty\@date \else {\vskip 1em \vtop{\centering\@date\@@par}}\fi 
  \ifx\@empty\@dedicatory
  \else
    \baselineskip18\p@
    \vtop{\centering{\footnotesize\itshape\@dedicatory\@@par}%
      \global\dimen@i\prevdepth}\prevdepth\dimen@i
  \fi
  \@setabstract
  \normalsize
  \if@titlepage
    \newpage
  \else
    \dimen@34\p@ \advance\dimen@-\baselineskip
    \vskip\dimen@\relax
  \fi
} 
\renewcommand*\@adminfootnotes{%
  \let\@makefnmark\relax  \let\@thefnmark\relax
  \ifx\@empty\@subjclass\else \@footnotetext{\@setsubjclass}\fi
  \ifx\@empty\@keywords\else \@footnotetext{\@setkeywords}\fi
  \ifx\@empty\thankses\else \@footnotetext{%
    \def\par{\let\par\@par}\@setthanks}%
  \fi
\thispagestyle{titlepage}
}
\begin{document}

\title[Fractional decomposition]{\large Fractional triangle decompositions
of dense\\ 3-partite graphs}

\author{Flora C. Bowditch and Peter J.~Dukes}
\address{
Mathematics and Statistics,
University of Victoria, Victoria, Canada
}
\email{bowditch@uvic.ca, dukes@uvic.ca}

\date{\today}

\begin{abstract}
We compute a minimum degree threshold sufficient for 3-partite graphs to
admit a fractional triangle decomposition.  Together with recent work of
Barber, K\"uhn, Lo, Osthus and Taylor, this leads to bounds for exact decompositions and in particular the
completion problem for sparse partial latin squares.  Some extensions are considered as well.
\end{abstract}

\thanks{Research of the authors is supported by NSERC: Flora Bowditch by an NSERC USRA and Peter Dukes by NSERC grant 312595--2010}

\maketitle
\hrule

\bigskip

\section{Introduction}
\label{sec-intro}

\subsection{Decompositions}

A graph $G$ has an $F$-\emph{decomposition} if its edges $E(G)$ can be
partitioned into graphs, each isomorphic to $F$.
Graph decompositions connect with a rich class of problems in combinatorics.
For example, decomposition of a complete graph into cliques is equivalent to
a special case of (pairwise balanced) block designs.  Related topics include
graph labellings, hypergraph matchings, and finite geometries.

For $G$ to admit an $F$-decomposition, it is necessary that $|E(G)|$ be
divisible by $|E(F)|$.  Moreover, the degree of every vertex in $G$ must be
divisible by $\gcd\{\deg(x):x \in V(F) \}$.  A graph $G$ satisfying these
two conditions is said to be $F$-\emph{divisible}.  A conjecture of
Nash-Williams \cite{NW} asks if all $K_3$-divisible graphs $G$ of order $n$
with $\delta(G) \ge \frac{3}{4}n$ admit a $K_3$-decomposition.  Although
this is presently still open, it was recently shown to be true for all very
large graphs $G$ if $\frac{3}{4}$ is replaced by something a bit larger.

\begin{thm}[from \cite{BKLO} and \cite{Dross}]
\label{dross}
Let $\epsilon>0$.  Every $K_3$-divisible graph $G$ on $n \ge n_0(\epsilon)$
vertices with $\delta(G) \ge (\frac{9}{10}+\epsilon) n$ has a
$K_3$-decomposition.
\end{thm}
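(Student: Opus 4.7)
The plan is to prove Theorem~\ref{dross} by combining two ingredients, corresponding to the two citations: a fractional $K_3$-decomposition result of Dross, and an iterative absorption transfer of Barber, K\"uhn, Lo and Osthus (BKLO) that promotes fractional decompositions to exact ones on $K_3$-divisible graphs. The guiding principle is that the minimum-degree threshold $\delta^\ast$ for exact $K_3$-decomposition of divisible graphs is bounded above by the threshold $\delta^\ast_f$ for fractional $K_3$-decomposition (in the limit $n \to \infty$), so the whole task reduces to showing $\delta^\ast_f \le 9/10$ and then spending the extra $\epsilon$ on absorption.

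First I would invoke Dross's bound. Dross shows that if $\delta(G) \ge (9/10)n$ then the edge-triangle incidence system admits a nonnegative rational solution assigning total weight $1$ to each edge, that is, a fractional $K_3$-decomposition of $G$. The argument is a local, LP-duality style computation: one redistributes weight between triangles along short transfer structures, and the $9/10$ figure arises from insisting that any two edges of a triangle have enough common third-vertex neighbours for weight to be reshuffled. This gives $\delta^\ast_f \le 9/10$.

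Next I would invoke the BKLO transfer. Using the $\epsilon$ slack, the strategy runs in three phases: (i) carve out an \emph{absorber} $A \subset G$ with the property that for every sufficiently sparse $K_3$-divisible leave $L$ edge-disjoint from $A$, the union $A \cup L$ has an exact $K_3$-decomposition; (ii) apply a weighted R\"odl-nibble / Pippenger--Spencer hypergraph matching in the triangle hypergraph of $G \setminus A$, with target weights coming from Dross's fractional decomposition, to edge-disjointly cover almost all of $E(G \setminus A)$ by triangles and leave a small, quasirandom remainder $L$; (iii) verify by divisibility bookkeeping that $L$ lies in the family absorbable by $A$ and finish the decomposition using $A$.

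The main obstacle will be step (i): one must design $A$ both small enough that its removal preserves the fractional-decomposition threshold (this is where the gap between $9/10 + \epsilon$ and $9/10$ is spent) and flexible enough that every possible divisible leave produced by the nibble can actually be absorbed. BKLO handle this through an iterative, geometric-decay construction of absorbers at several scales, with careful parity/divisibility arguments to keep the class of absorbable leaves closed under the reductions. Once the absorber is built and Dross's fractional theorem is applied to $G \setminus A$, the nibble together with standard concentration inequalities finishes the proof and yields the asserted $(9/10 + \epsilon)$ threshold for exact $K_3$-decomposition.
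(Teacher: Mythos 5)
Your proposal matches the route the paper indicates: Theorem~\ref{dross} is cited rather than proved in this paper, and the introduction attributes it to exactly the combination you describe --- Dross's fractional $K_3$-decomposition threshold at $\delta(G) \ge \tfrac{9}{10}n$, the Haxell--R\"odl passage from fractional to approximate decompositions, and the iterative absorption of Barber, K\"uhn, Lo and Osthus to upgrade the approximate decomposition to an exact one on the $K_3$-divisible graph using the $\epsilon$ slack. Your three-phase sketch (absorber construction, nibble guided by the fractional weights, divisibility bookkeeping on the leave) is a faithful reconstruction of the argument the paper imports from \cite{BKLO} and \cite{Dross}.
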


A key ingredient in the proof of Theorem~\ref{dross} is a result of Barber,
K\"uhn, Lo and Osthus in \cite{BKLO} which connects the minimum degree
threshold for $F$-decompositions to the minimum degree threshold for
approximate $F$-decompositions.  The technique uses absorbers to iteratively
improve approximate decompositions.  Since good approximate decompositions
are implied by `fractional' decompositions, \cite{HR}, the latter has
attracted increased interest.  Formally, we say that $G$ has a
\emph{fractional} $F$-\emph{decomposition} if there is a list of ordered
pairs $(F_i,w_i)$, where $F_i$ is a copy of $F$ in $G$, $w_i$ is a
nonnegative real weight, and such that, for each edge $e \in E(G)$,
$$\sum_{i:e \in E(F_i)} w_i = 1.$$
Note that $F$-divisibility is no longer needed, but there remain `convex
geometric barriers' for the fractional relaxation.  On a related point, there exist counterexamples 
to weakening the minimum degree assumption in Nash-Williams' conjecture.  For instance, the lexicographic
product $C_4 \cdot K_{6m+3}$ is $K_3$-divisible with minimum degree near
$3n/4$, but it violates a geometric barrier for $K_3$-decomposition: namely, there exists a vertex partition
with too many crossing edges.  For similar reasons, a minimum degree hypothesis is generally assumed for fractional decompositions in general.
In particular, the fractional version of Theorem~\ref{dross} is the main result of \cite{Dross}.

Early work on minimum degree thresholds for fractional decompositions was 
done by Yuster, \cite{Yuster1,Yuster2}.  Even in the case of hypergraphs, significant
progress has been made in lowering the thresholds, first by the second
author in \cite{Dukes-corr} and then by Barber, K\"uhn, Lo, Montgomery and
Osthus in \cite{BKLMO}.  We refer the reader to these papers for precise thresholds, which are now qualitatively not too far from best possible.

\subsection{Decompositions in the partite setting}

The authors and those of \cite{BKLMO} recently wondered whether there could
be a `multipartite' analog of the preceding theory.  This sort of variant
has prior precedent; for example, Keevash and Mycroft in \cite{KM} establish
a multipartite version of the Hajnal-Szemer\'edi theorem.  Moreover, the
decomposition of complete $r$-partite graphs into cliques $K_r$ implies
results on the classical problem of mutually orthogonal latin squares just
as clique decompositions of complete graphs relate to block designs.

With this in mind, Barber et al.~very recently produced an $r$-partite
version of their approximate-to-exact result for $K_r$-decompositions.  We
give some remarks before the statement.  Suppose $G$ is $r$-partite and we
seek a $K_r$-decomposition of $G$.  It is necessary that every vertex of $G$
has the same number of neighbors in each of the other $r-1$ partite sets.
We call such a graph (equipped with vertex partition) \emph{locally
balanced}; the term $K_r$-\emph{divisible} is used instead in \cite{BKLOT}.
In any case, it is clear that this condition is actually necessary for the
fractional relaxation as well.
Let $\tau_F(n,r)$ denote the infimum over all $\tau$ such that every locally
balanced $r$-partite graph $G$ on $rn$ vertices with $\delta(G) \ge \tau
(r-1)n$ admits a fractional $K_r$-decomposition.  Put $\tau_F(r):= \lim
\sup_{n \rightarrow \infty} \tau_F(n,r)$.  The approximate-to-exact result
is as follows.

\begin{thm}[\cite{BKLOT}, Corollary 1.6]
\label{integralize}
Let $r$ be an integer at least $3$, $\epsilon > 0$ and let $n >
n_0(\epsilon)$.  Suppose $G$ is a balanced and locally balanced $r$-partite
graph on $rn$ vertices with $\delta(G) \ge (\tau_F(r)+\epsilon)(r-1)n$.
Then $G$ admits a $K_r$-decomposition.
\end{thm}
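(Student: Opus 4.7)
The plan is to adapt the absorbing method of \cite{BKLO} from the non-partite to the $r$-partite setting. The hypothesis gives slack $\epsilon(r-1)n$ beyond the fractional threshold $\tau_F(r)$, and this slack is exploited both to run a partite nibble and to accommodate a sparse absorbing subgraph. The proof proceeds in three stages: reserve an absorber $A \subseteq G$, approximately $K_r$-decompose the remainder, and use $A$ to absorb the leftover.

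First, I would use a probabilistic construction to select a spanning subgraph $A \subseteq G$ of small density (say $\epsilon^{2}$), with a small rebalancing step so that both $A$ and $G' := G - E(A)$ are locally balanced and $\delta(G') \geq (\tau_F(r) + \epsilon/2)(r-1)n$. The design property required of $A$ is that for every locally balanced $r$-partite graph $L$ on the same vertex set with $|E(L)| \leq \eta n^{2}$ (for a sufficiently small $\eta = \eta(\epsilon)$), the union $A \cup L$ admits a $K_r$-decomposition. This is achieved by stocking $A$ with many edge-disjoint copies of small partite \emph{gadgets}, each of which is an $r$-partite graph possessing two $K_r$-decompositions whose symmetric difference realizes a canonical leftover configuration.

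Next, since $G'$ still lies above the fractional degree threshold, the definition of $\tau_F(r)$ furnishes a fractional $K_r$-decomposition of $G'$. A partite Pippenger--Spencer / Haxell--R\"odl nibble turns this weighting into an edge-disjoint packing of copies of $K_r$ in $G'$ that covers all but an arbitrarily small proportion of $E(G')$. The uncovered set $L$ inherits local balance (since $G'$ is locally balanced and removing copies of $K_r$ preserves this), and $|E(L)|$ can be made smaller than the $\eta n^2$ threshold baked into $A$. Invoking the absorbing property of $A$ then completes the $K_r$-decomposition of $G$.

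The main obstacle is the construction of the partite absorber $A$. In the non-partite setting of \cite{BKLO} one has ample freedom in building gadgets, but here every gadget must respect the vertex partition, and the induced partial $K_r$-decomposition on each gadget must be locally balanced. Typically one proceeds through a hierarchy: first construct a universal absorber for a single canonical locally balanced leftover $L_0$, then construct partite \emph{transformers} that certify any two locally balanced leftovers of the same small size have a common $K_r$-decomposable closure. Establishing, from only a minimum degree hypothesis, that $G$ is rich enough in the transformer and gadget configurations required, and that these can be chosen mutually edge-disjoint so they fit inside the sparse $A$, is the delicate step where the partite analog demands the most new work beyond the nibble.
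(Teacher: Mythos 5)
This theorem is not proved in the paper at hand; it is quoted verbatim as Corollary~1.6 of the cited work of Barber, K\"uhn, Lo, Osthus and Taylor, and the authors use it as a black box to convert their fractional threshold $\tau_F(3)<0.96$ into an exact decomposition result. So there is no internal proof to compare your sketch against. That said, your high-level plan (reserve a sparse absorber, run a partite nibble driven by the fractional decomposition, absorb the small leftover) is a fair caricature of the approach the cited paper actually uses, and it is worth flagging where it goes wrong.

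The concrete gap is in the absorber you propose. You ask for a single sparse subgraph $A$ with the property that $A\cup L$ has a $K_r$-decomposition for \emph{every} locally balanced $L$ on the same $rn$ vertices with $|E(L)|\le \eta n^2$. No such $A$ of density $o(1)$ can exist: the number of candidate leftovers $L$ of that size is superexponential in $n$, and each genuinely new $L$ forces $A$ to contain a fresh supply of edge-disjoint gadget copies to repair it, so a universal absorber would have to be dense. The cited papers get around precisely this obstruction with \emph{iterative} absorption: one fixes a random nested vortex $V(G)=U_0\supseteq U_1\supseteq\cdots\supseteq U_\ell$ with $|U_\ell|=O(1)$, repeatedly applies the approximate decomposition (your nibble step) to cover $G[U_i]\setminus G[U_{i+1}]$ while funnelling the uncovered edges into $U_{i+1}$, and only at the bottom level invokes absorbers. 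Because $U_\ell$ has bounded size, there are only boundedly many possible leftovers there, and one can afford to stock $A$ with a dedicated absorber for each. Your transformers are the right ingredient for showing two bounded leftovers have a common $K_r$-decomposable closure, but without the vortex machinery you cannot reduce to a bounded list of leftovers in the first place. A secondary issue worth noting: you need the nibble step itself to keep the leftover not merely small but also locally balanced \emph{and} supported inside $U_{i+1}$, which requires additional cleaning moves (covering cross edges with $K_r$'s that use fresh interior vertices); simply observing that removing $K_r$'s preserves local balance is not enough to steer the leftover into the vortex.
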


Except for some general remarks later, our primary focus here is on
calculating an upper bound on $\tau_F(3)$, thereby obtaining a threshold for
triangles in the 3-partite setting.  Our main result is next.

\begin{thm}
\label{main}
We have $\tau_F(3) < 0.96$.  So, for sufficiently large $n$, every
locally balanced 3-partite graph $G$ on $3n$ vertices satisfying
$\delta(G)/2n \ge 0.96$ admits a fractional $K_3$-decomposition.
\end{thm}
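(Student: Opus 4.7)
The plan is to establish feasibility of the LP for fractional $K_3$-decomposition of $G$ by explicit construction of nonnegative triangle weights $w_T$ that satisfy every edge constraint $\sum_{T \ni e} w_T = 1$. To set up notation, let $V_1, V_2, V_3$ be the three parts, each of size $n$. Local balance forces $|N(v) \cap V_j|$ to be independent of $j$ for the two indices $j \ne i$ with $v \in V_i$; write $d(v)$ for this common value, so the hypothesis becomes $d(v) \ge 0.96\,n$ for every $v$. For an edge $e = uv$ joining $V_i$ and $V_j$, set $t(e) := |N(u) \cap N(v) \cap V_k|$, the number of triangles of $G$ through $e$; a one-line inclusion--exclusion then yields $0.92\,n \le t(e) \le n$.

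I would next try an ansatz of the form
$$w_T \;=\; \alpha + \beta \sum_{e \in T} \varphi(e),$$
with scalars $\alpha, \beta$ to be chosen and $\varphi(e)$ a short local correction (e.g.~a linear function of $t(e)$, or $1/t(e)$). Summing over triangles through a fixed edge $e = uv$,
$$\sum_{T \ni e} w_T \;=\; \alpha\,t(e) \;+\; \beta\,\varphi(e)\,t(e) \;+\; \beta \sum_{w \in N(u)\cap N(v)} [\varphi(uw) + \varphi(vw)].$$
The last sum is estimated using $t(\cdot) \in [0.92\,n, n]$ together with standard double-counting identities for common-neighborhood sums. Matching coefficients---and, if necessary, introducing a third free parameter via a higher-order correction such as $\sum_{e \in T} \varphi(e)^2$---yields a small linear system determining $\alpha,\beta,\dots$ so that $\sum_{T \ni e} w_T = 1$ holds up to $o(1)$ error for every edge $e$.

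The main obstacle will be verifying nonnegativity of $w_T$ uniformly across every triangle $T$ while the edge constraints are (approximately) satisfied. Substituting the pointwise bounds $t(e) \in [0.92\,n, n]$ reduces this to a polynomial inequality in the slack parameter $\eta := 1 - \delta(G)/(2n)$; the stated threshold $0.96$ (i.e.\ $\eta = 0.04$) is precisely the value at which the optimization over worst-case extremal configurations becomes tight. The strict margin in the hypothesis $\delta(G)/(2n) \ge 0.96$ then leaves enough room to absorb the $o(1)$ slack in the edge-cover equations via a tiny perturbation of $(\alpha, \beta, \dots)$ without spoiling nonnegativity, for all $n$ sufficiently large. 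A final standard rounding step converts the approximate fractional cover into an exact fractional $K_3$-decomposition, completing the argument.
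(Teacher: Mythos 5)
Your proposal is a sketch rather than a proof, and the gaps are substantial enough that the argument does not go through as written.

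The preliminary observations are correct: local balance gives a well-defined $d(v)$, and inclusion--exclusion yields $t(e)\ge d(u)+d(v)-n\ge 0.92\,n$ under the stated hypothesis. The ansatz $w_T=\alpha+\beta\sum_{e\in T}\varphi(e)$ is also in the right ballpark: since $\alpha=\sum_{e\in T}(\alpha/3)$, you are really proposing that $w_T$ decomposes as a sum of edge-contributions $x_e$, which is exactly the \emph{fan decomposition} ansatz the paper uses (solve $M_G\vx=\vj$ where $M_G=W_GW_G^\top$, then set $w_T=\sum_{e\in T}x_e$). The difference is that the paper lets $x_e$ be a free variable for every edge --- $3n^2$ degrees of freedom against $3n^2$ equations --- whereas you restrict to a two- or three-parameter family $x_e=\alpha/3+\beta\varphi(e)+\cdots$. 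That is far too few parameters to satisfy $\Theta(n^2)$ exact edge constraints, which is why your sketch quietly retreats to satisfying them ``up to $o(1)$ error.''

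That retreat is the fatal gap. Converting an approximate fractional cover (row sums $1+o(1)$) into an exact one ($=1$) while preserving nonnegativity is \emph{not} a ``standard rounding step'' --- it is, essentially, the whole problem. If an $o(1)$-approximate solution could be routinely repaired, the entire literature on fractional decomposition thresholds would collapse; the difficulty is precisely that the constraints must hold with equality, and small per-edge corrections propagate through the system. The paper avoids this by solving the linear system exactly: it augments $M_G$ by a multiple of the kernel projection $K[G]$ so the system becomes nonsingular (Lemmas~\ref{kernel1}, \ref{kernel2}, \ref{shift}), computes $(M+\eta^*K)^{-1}$ explicitly inside the Bose--Mesner algebra of the Hamming scheme (Proposition~\ref{idempotents}, Lemma~\ref{inf-norm}), and controls the perturbation $\delta A$ via matrix norm estimates (Lemma~\ref{prodnorm}) so that Corollary~\ref{nonneg} forces the exact solution to stay nonnegative. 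Your claim that ``the stated threshold $0.96$ is precisely the value at which the optimization over worst-case extremal configurations becomes tight'' is asserted, not derived; in the paper that constant emerges from a concrete computation (namely $c<(\sqrt{409}-17)/80\approx 0.04$, with the row-sum bound $23/9n$ and the product-norm bound $40c/3$), not from extremal configurations. To turn your sketch into a proof you would need either (i) to actually carry out the coefficient-matching and show an \emph{exact} solution exists within your parametric family with nonnegative $w_T$, which seems unlikely given the dimension mismatch, or (ii) to abandon the low-parameter ansatz and work with the full linear system as the paper does.
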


The idea of the proof is quite natural, and closely follows
\cite{Dukes-frac,Dukes-corr}.  We would like to choose nonnegative weights
for triangles in $G$ so that the sum of weights at each edge is a constant.
This is a system of linear equations with coefficients in $\{0,1\}$.  For
the complete 3-partite graph $K_{n,n,n}$, the coefficient matrix is rich
with symmetry.  In particular, every edge is included in $n$ triangles, and
so our system has a (positive) constant solution.  Now, since $G$ is close
to $K_{n,n,n}$ and a solution for the latter is well-separated from the zero
vector, there remains a positive solution for $G$.
Some care must be taken in the quantitative analysis, since perturbations of
ill-conditioned linear systems can lead to wild changes in solutions.
Fortunately, the coefficient matrix for $K_{n,n,n}$ is `nice', and in fact
has some interesting connections with the classical matrices of algebraic
coding theory.

Suppose $G$ is a graph that admits a $K_3$-decomposition.  Such a
decomposition can be lifted to the 3-partite graph $G \times K_3$ by
replacing triangle $\{x,y,z\}$ in $G$ with six triangles
$\{(x,i),(y,j),(z,k)\}$ in $G \times K_3$ for $\{i,j,k\}=\{1,2,3\}$.
The converse holds fractionally.  That is, any fractional
$K_3$-decomposition of $G \times K_3$ projects to a fractional
$K_3$-decomposition of $G$ by averaging over the six pre-images of
$\{x,y,z\}$. It follows that determining an upper bound on the 3-partite
degree threshold $\tau_F(3)$ is at least as difficult as obtaining the same
threshold bound for the ordinary dense case.  There is no obvious way of
using Theorem~\ref{dross} to conclude anything about the $3$-partite
threshold $\tau_F(3)$.

\subsection{Latin squares}
A \emph{latin square} of order $n$ is an $n \times n$ array with entries
from a set of $n$ symbols (often taken to be $[n]:=\{1,2,\dots,n\}$) having
the property that each symbol appears exactly once in every row and every
column.  Naturally, a \emph{partial latin square} of order $n$ is an $n
\times n$ array whose cells are either empty or filled in such a way that
each symbol appears at most once in every row and every column.
A partial latin square can be identified with a set of ordered triples in a
natural way: if symbol $k$ appears in row $i$ and column $j$, we include the
ordered triple $(i,j,k)$.  A \emph{completion} of a partial latin square $P$
is a latin square $L$ which contains $P$ in the sense of ordered triples;
that is, we have the filled cells of
$P$ agreeing with corresponding cells in $L$.

A latin square of order $n$ is equivalent both to a one-factorization of
$K_{n,n}$ and also a $K_3$-decomposition of $K_{n,n,n}$.  The latter
corresponds with the representation by ordered triples, where the three
partite sets are rows, columns, and symbols.  Similarly, a partial latin
square $P$ corresponds to an edge-disjoint union of triangles  in
$K_{n,n,n}$, and the completion problem amounts to a $K_3$-decomposition of
the (3-partite) complement.

Following Bartlett, we call a partial latin square (of order $n$)
$c$-\emph{dense} if every row, column, and symbol appears at most $c n$
times.  For the completion problem, such latin squares induce locally
balanced 3-partite graphs with minimum degree at least $2(1-c)n$.
Daykin and H\"aggkvist conjectured in \cite{DH} that all $1/4$-dense partial
latin squares can be completed.  The first serious progress toward this
conjecture was by Chetwynd and H\"aggkvist, who showed in \cite{CH} that,
for sufficiently large even integers $n$, $c=10^{-5}$ suffices to guarantee
a completion.
Gustavsson \cite{Gus} obtained the threshold $c=10^{-7}$ for all $n$.  These
proofs were technical and required long chains of substitutions.  Recently,
Bartlett \cite{Bartlett} increased the threshold to $c=10^{-4}$ for large
$n$ using the notion of `improper trades'.  In fact, this method showed that
completion is possible for densities near $1/12$, but under a strong
additional assumption on the total number of filled cells.

Improving the threshold on $c$ is one consequence of
Theorem~\ref{integralize} and our main result, Theorem~\ref{main}.

\begin{cor}[see also \cite{BKLOT}]
\label{ls-completion}
Let $0<c \le 0.04$.  Let $P$ be a partial latin square of order $n \ge n_0(c)$ such that  every row,
column, and symbol is used at most $cn$ times.
Then $P$ can be completed to a latin square.
\end{cor}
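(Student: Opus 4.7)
The plan is to translate the completion problem into a $K_3$-decomposition problem and then invoke Theorems~\ref{main} and~\ref{integralize} directly.  As sketched in the subsection on latin squares, a partial latin square $P$ of order $n$ corresponds to a collection of edge-disjoint triangles in $K_{n,n,n}$, where the three parts index rows, columns, and symbols.  A completion of $P$ to a latin square is equivalent to a $K_3$-decomposition of the 3-partite graph $G$ obtained from $K_{n,n,n}$ by removing the edges of these triangles.

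It therefore suffices to check that $G$ satisfies the hypotheses of Theorem~\ref{integralize} with $r = 3$ and some admissible $\epsilon > 0$.  Each part of $G$ has exactly $n$ vertices, so $G$ is balanced.  For local balance, suppose row $i$ contains $a_i$ filled cells.  Then the corresponding row-vertex loses exactly $a_i$ column-neighbors and $a_i$ symbol-neighbors (since a row uses $a_i$ distinct columns and $a_i$ distinct symbols), leaving $n - a_i$ neighbors in each of the other two parts; symmetric statements hold for column- and symbol-vertices.  The density hypothesis $a \le cn$ for all rows, columns, and symbols yields $\delta(G) \ge 2(1-c)n \ge 1.92\, n$.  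By Theorem~\ref{main}, $\tau_F(3) < 0.96$, so we may pick $\epsilon > 0$ with $\tau_F(3) + \epsilon \le 1 - c$.  Theorem~\ref{integralize} then produces a $K_3$-decomposition of $G$ for all $n \ge n_0(\epsilon) = n_0(c)$, and translating this back gives the desired completion of $P$.

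There is no serious obstacle beyond the two ingredients we are invoking as black boxes.  The only point requiring a moment's thought is the verification of local balance, which reduces to the fact that each filled cell $(i,j,k)$ simultaneously accounts for row $i$'s use of column $j$ and row $i$'s use of symbol $k$, together with the analogous statements after cyclically permuting the three coordinates.
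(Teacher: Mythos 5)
Your proof is correct and follows essentially the same route as the paper: identify the completion of $P$ with a $K_3$-decomposition of the $3$-partite complement graph, verify balance, local balance, and the minimum degree bound $\delta(G) \ge 2(1-c)n$, and then apply Theorem~\ref{integralize} together with $\tau_F(3) < 0.96$ from Theorem~\ref{main}. The only difference is that you spell out the local balance verification that the paper leaves as "by construction", and you make the choice of $\epsilon$ explicit.
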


To prove Corollary~\ref{ls-completion}, it suffices to find a
$K_3$-decomposition of the graph $G_P$ on $3n$ vertices, one for each row,
column, and symbol, where an edge is drawn between two vertices if and only if they are
not incident in $P$.
By construction, $G_P$ is locally balanced, and $\delta(G_P) \ge 2(1-c)n$.  
By Theorem~\ref{integralize} with our $\tau_F(3) < 0.96$, it follows that $G_P$ admits a $K_3$-decomposition.

\subsection{Organization of the paper}

In the next section, we set up a linear system that models triangle decompositions of graphs in the dense 3-partite
setting.  In Section~\ref{sec-norms}, our approximation technique is made
precise, allowing us to turn our attention to the system for $K_{n,n,n}$.
Then, in Section~\ref{sec-assoc}, the coefficient matrix for $K_{n,n,n}$ is shown to lie in a certain low-dimensional algebra.
This permits the calculations necessary to complete the proof of Theorem~\ref{main}, which occurs in Section~\ref{sec-pf}. We
sketch the technique for larger cliques and hypergraphs in Section~\ref{sec-generalize},
and conclude with a discussion of possible future directions in Section~\ref{sec-discuss}.

\section{The linear system}
\label{sec-sys}

Let $G$ be a graph.  We work primarily in the vector space
$\Omega(G):=\mathbb{R}^{E(G)}$, whose coordinates are indexed by edges of
$G$ (in any order).  Let $T(G)$ be the set of all triangles in $G$ and let
$W_G$ be the $\{0,1\}$ `inclusion' matrix of $E(G)$ versus $T(G)$.  That is,
$W_G$ has rows indexed by $E(G)$, columns indexed by $T(G)$, and where
$$W_G(e,t) = \begin{cases}
1 & \text{if } e \subseteq t, \\
0 & \text{otherwise}.
\end{cases}$$

A fractional $K_3$-decomposition of $G$ is equivalent to a solution
$\mathbf{z} \ge \mathbf{0}$, by which we mean that $\mathbf{z}$ is
entrywise nonnegative, to the system
\begin{equation}
\label{system}
W_G \mathbf{z}=\mathbf{1},
\end{equation}
where $\vj$ is the vector of all ones in $\Omega(G)$.

In general, some edges of $G$ might belong to no triangles, and so
(\ref{system}) might have no solution.  Even if there is a solution, there
are usually more triangles than edges and such a solution is not unique.

Following \cite{Kseniya}, the set $\{t \in T(G): t \supseteq e \}$ is called
the \emph{fan} in $G$ at $e$.
If the fans at each edge in $G$ are very rich, it is reasonable to ask
whether we can decompose $G$ into fans; this corresponds to assuming that
our solution $\vz$ is a linear combination of the rows of $W_G$, since the
rows of $W_G$ tell us which triangles contain a given edge.  The linear
system for fan decomposition is then
\begin{equation}
\label{system-fan}
M_G \vx=\vj,
\end{equation}
where $M_G = W_G W_G^\top$ is a square matrix.  Combinatorially, $M_G$ has
rows and columns indexed by $E(G)$, and the $(e,f)$-entry of $M_G$ records
the number of triangles in $G$ containing $e \cup f$.  Observe that a
solution $\mathbf{x} \ge \mathbf{0}$ to (\ref{system-fan}) implies a
solution $\mathbf{z} \ge \mathbf{0}$ to (\ref{system}) and therefore implies
a fractional triangle decomposition of $G$.

If $M_G$ is non-singular, then of course the system (\ref{system-fan}) has a
unique solution.  In general, though, $M_G$ may have nontrivial kernel; we
describe this kernel for the dense 3-partite case later.  First, we offer
some examples of $M_G$ to illustrate the method.

\begin{ex}
If $G=K_n$, then $M_G = (n-2)I + A_1$, where $A_1$ denotes the adjacency
matrix for the line graph of $K_n$.  The eigenvalues of $A_1$ are well known
to be $2n-4$, with multiplicity $1$, $v-4$, with multiplicity $2n-4$, and
$-2$, with multiplicity $\binom{n-2}{2}$; see \cite{GodsilRoyle}, for
example.  In fact, the eigenspaces of $A_1$ admit a nice description.  Since
$A_1$ is symmetric, this description affords an explicit orthogonal
decomposition of $\Omega(K_n)$.  It follows that $M_G^{-1}$ exists and can
be computed explicitly for all $n$.  In any case, the unique solution to
(\ref{system-fan}) is $\vx = \frac{1}{3n-6} \vj$, the eigenvector for the
largest eigenvalue.
\end{ex}

Our new investigation starts with the case $G=K_{n,n,n}$.
Since we work with this graph frequently in what follows, we suppress subscripts on $W$ and $M$ for this
graph.  That is, $W$ is the inclusion matrix of $E(K_{n,n,n})$ versus
$T(K_{n,n,n})$ and 
$$M_G(e,f) = \begin{cases}
n & \text{if } e =f, \\
1 & \text{if } e \cup f \text{ consists of three points in different partite classes,}\\
0 & \text{otherwise}.
\end{cases}$$
Similar to the case of $K_n$, we have $M$ agreeing with the line graph of
$K_{n,n,n}$ off the diagonal.

\begin{ex}
With $n=2$ and rows/columns organized by partite sets, we have
$$M=
\begin{footnotesize}
\left[
\begin{array}{cccc|cccc|cccc}
2 & 0 & 0 & 0 & 1 & 1 & 0 & 0 & 1 & 1 & 0 & 0\\
0 & 2 & 0 & 0 & 1 & 1 & 0 & 0 & 0 & 0 & 1 & 1\\
0 & 0 & 2 & 0 & 0 & 0 & 1 & 1 & 1 & 1 & 0 & 0\\
0 & 0 & 0 & 2 & 0 & 0 & 1 & 1 & 0 & 0 & 1 & 1\\
\hline
1 & 1 & 0 & 0 & 2 & 0 & 0 & 0 & 1 & 0 & 1 & 0\\
1 & 1 & 0 & 0 & 0 & 2 & 0 & 0 & 0 & 1 & 0 & 1\\
0 & 0 & 1 & 1 & 0 & 0 & 2 & 0 & 1 & 0 & 1 & 0\\
0 & 0 & 1 & 1 & 0 & 0 & 0 & 2 & 0 & 1 & 0 & 1\\
\hline
1 & 0 & 1 & 0 & 1 & 0 & 1 & 0 & 2 & 0 & 0 & 0\\
1 & 0 & 1 & 0 & 0 & 1 & 0 & 1 & 0 & 2 & 0 & 0\\
0 & 1 & 0 & 1 & 1 & 0 & 1 & 0 & 0 & 0 & 2 & 0\\
0 & 1 & 0 & 1 & 0 & 1 & 0 & 1 & 0 & 0 & 0 & 2
\end{array}
\right].
\end{footnotesize}
$$
\end{ex}
It is not hard to compute the rank of $W$ (and of $M$).

\begin{prop}
\label{rank}
We have $\text{rank}(W)=\text{rank}(M)=n^3-(n-1)^3$.
\end{prop}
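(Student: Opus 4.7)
The identity $n^3 - (n-1)^3 = 3n^2 - 3n + 1$ together with the fact that $W$ has $3n^2$ rows suggests that the plan should be to compute the left nullity of $W$ and confirm it equals $3n-1$. I would proceed in two stages: first reduce the two rank claims to one, then explicitly parametrize the left kernel of $W$.

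For the reduction, I would observe that over $\mathbb{R}$ one always has $\ker(WW^\top) = \ker(W^\top)$, because $WW^\top y = 0$ implies $\|W^\top y\|^2 = y^\top W W^\top y = 0$ and hence $W^\top y = 0$. Therefore $\text{rank}(M) = \text{rank}(W^\top) = \text{rank}(W)$, and it suffices to show that $N := \ker(W^\top)$ has dimension $3n-1$.

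To analyze $N$, I would label the three partite sets $A, B, C$ and decompose $y \in \mathbb{R}^{E(K_{n,n,n})}$ as $(y_{AB}, y_{AC}, y_{BC})$. The condition $y \in N$ is that the sum along every column of $W$ vanishes, i.e.,
\[
y_{AB}(a,b) + y_{AC}(a,c) + y_{BC}(b,c) = 0 \qquad \text{for all } (a,b,c) \in A \times B \times C.
\]
Fix any base point $c_0 \in C$ and set $g(a) := y_{AC}(a, c_0)$ and $f(b) := y_{BC}(b, c_0)$. Specializing the above to $c = c_0$ forces $y_{AB}(a,b) = -g(a) - f(b)$. For general $c$, subtracting the $c_0$-equation from the general equation yields $y_{AC}(a,c) - g(a) = -(y_{BC}(b,c) - f(b))$; since the left side depends only on $a$ and the right only on $b$, each must equal some common function $-h(c)$, with $h(c_0) = 0$.

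Conversely, any triple $(f, g, h)$ with $h(c_0) = 0$ yields an element of $N$ via
\[
y_{AB}(a,b) = -g(a) - f(b), \quad y_{AC}(a,c) = g(a) - h(c), \quad y_{BC}(b,c) = f(b) + h(c),
\]
and this parametrization is injective (specializing $c = c_0$ recovers $f$ and $g$, and any other $c$ then recovers $h$). Hence $\dim N = n + n + (n-1) = 3n-1$, giving $\text{rank}(W) = \text{rank}(M) = 3n^2 - 3n + 1 = n^3 - (n-1)^3$. The only step that requires care is verifying that $y_{BC}(b,c) - f(b)$ depends on $c$ alone, but this falls out immediately from subtracting pairs of triangle equations; the rest is a routine dimension count.
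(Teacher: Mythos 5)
Your proof is correct, but takes a different route from the paper's. The paper proves the rank claim by working with the \emph{columns} of $W$: it exhibits a set $\mathcal{I}$ of $n^3-(n-1)^3$ triangles (those meeting a fixed triangle $\alpha\beta\gamma$) and shows by an explicit hitting-set argument that $\mathcal{I}$ is linearly independent, then shows every other triangle is a signed sum of seven members of $\mathcal{I}$, giving equality. You instead compute $\dim\ker(W^\top)$ directly by writing down a complete parametrization of the left null space by triples $(f,g,h)$ modulo the one-dimensional redundancy $(f,g,h)\mapsto(f+t,g-t,h-t)$, which you fix by the normalization $h(c_0)=0$; this is a clean and fully rigorous dimension count. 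Interestingly, the paper arrives at essentially your kernel description \emph{after} Proposition 2.4, via the vectors $\mathbf{v}_\beta$ (your $f,g,h$ are coordinates with respect to this spanning set, and the relation $\sum_\beta\mathbf{v}_\beta=\mathbf{0}$ is exactly your one-parameter redundancy). So your approach front-loads the kernel analysis that the paper does afterward anyway, at the cost of not producing the explicit triangle basis $\mathcal{I}$, which the paper's route provides as a byproduct. Both proofs are sound; the paper's gives a combinatorial basis of the column space, yours gives a cleaner count of the cokernel.
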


\begin{proof}
It is well known that, for matrices $A$ over $\R$ or $\C$,
$\text{rank}(A)=\text{rank}(AA^\top)$.  This gives the first equality.

Let $\alpha\beta\gamma$ be any triangle in $K_{n,n,n}$. We claim that the
set $\mathcal{I}$ of $n^3-(n-1)^3$ triangles which intersect $\alpha \beta
\gamma$ in at least one point is linearly independent. Consider the edge
$\beta' \gamma' \in E(K_{n,n,n})$ for $\beta',\gamma'$ in the same parts but
distinct from $\beta,\gamma$.  It belongs only to the triangle $\alpha
\beta' \gamma' \in \mathcal{I}$.  Likewise, an edge of the form $\beta'
\gamma$ belongs to a unique triangle among those intersecting $\alpha \beta
\gamma$ in at least two points.  From this argument we obtain
$\text{rank}(W) \ge n^3-(n-1)^3$.

For the reverse inequality, if $\alpha \beta \gamma$ is used to denote the
formal linear combination $\alpha \beta + \alpha \gamma + \beta \gamma \in
\Omega(K_{n,n,n})$, there is the identity
$$\alpha' \beta' \gamma' = \alpha \beta \gamma - \alpha'\beta \gamma  -
\alpha\beta' \gamma - \alpha \beta \gamma' + \alpha'\beta' \gamma +
\alpha'\beta \gamma' + \alpha\beta' \gamma'.$$
This shows that every triangle in $T(K_{n,n,n})$ is a linear combination of
those in $\mathcal{I}$.
\end{proof}

Let us now discuss the kernel of $W^\top$ (and $M$).  By
Proposition~\ref{rank}, we have
$$\dim \ker(W^\top)=\dim \ker(M)=3n^2-(n^3-(n-1)^3) = 3n-1.$$
Let $\beta \in V(K_{n,n,n})$.  As in the proof above, we adopt the
convention that $\alpha,\beta,\gamma$ (and their variants) stand for
vertices in the three different partite sets labelled in some consistent
cyclic order.  With this understanding, define the vector $\mathbf{v}_\beta
\in \Omega(K_{n,n,n})$ by
$$\mathbf{v}_\beta (e) =
\begin{cases}
-1 & \text{if}~e = \alpha \beta~\text{for some}~\alpha,\\
1 & \text{if}~e = \beta \gamma~\text{for some}~\gamma, \\
0 & \text{otherwise}.
\end{cases}$$
It is clear that $\mathbf{v}_\beta$ vanishes on $T(K_{n,n,n})$; therefore
$\mathbf{v}_\beta \in \ker(W^\top)$ for each $\beta$.  Any set of $3n-1$ of
these vectors is linearly independent (their sum vanishes) and forms a basis
for $\ker(W^\top)=\ker(M)$.

We now sketch a `kernel elimination strategy' that is useful for our
problem.  Let $K$ be the matrix which applies orthogonal projection onto
$\ker(M)$.  Then, for any nonzero real number $\eta$, the linear system
$(M+\eta K) \vx = \vj$ has the unique constant solution $\vx = \frac{1}{3n}
\vj$.  This can be viewed alternatively as the addition of $3n-1$ artificial
columns $\mathbf{v}_\beta$ to $W$.  The resulting matrix has full row rank
$3n^2$.

Suppose now that $G$ is a spanning subgraph of $K_{n,n,n}$.  Let $\vj_G \in
\Omega(K_{n,n,n})$ be the characteristic vector of edges in $G$; that is,
\begin{equation*}
\vj_G (e) = \begin{cases}
1 & \text{if } e \in E(G), \\
0 & \text{otherwise}.
\end{cases}
\end{equation*}
Also, for a square matrix $A$ indexed by $\Omega(K_{n,n,n})$, let $A[G]$
denote its restriction to the principal submatrix indexed by $\Omega(G)$.
The kernel elimination strategy for $M_G$ is similar in spirit as that for
$M$.  Here, though, we add a multiple of $K[G]$ and must justify that this
works.
First, the following is easily verified.

\begin{prop}
\label{kernel1}
Let $G$ be a locally balanced spanning subgraph of $K_{n,n,n}$.  Then
$K[G] \vj = \vo$.
\end{prop}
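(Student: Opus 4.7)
The plan is to prove the stronger assertion that $K\vj_G = \vo$ in $\Omega(K_{n,n,n})$; the stated proposition then follows by restriction, since for every $e \in E(G)$ one has
\[ (K[G]\,\vj)(e) = \sum_{f \in E(G)} K(e,f) = \sum_{f \in E(K_{n,n,n})} K(e,f)\,\vj_G(f) = (K\vj_G)(e). \]
Thus $K[G]\vj$ is simply the restriction of $K\vj_G$ to $E(G)$, and in particular vanishes as soon as $K\vj_G$ does.

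Since $K$ is the orthogonal projection onto $\ker M = \ker(W^\top)$, the identity $K\vj_G = \vo$ is equivalent to $\vj_G$ being orthogonal to $\ker M$. The vectors $\mathbf{v}_\beta$ (over $\beta \in V(K_{n,n,n})$) span this kernel, so it suffices to verify $\langle \vj_G, \mathbf{v}_\beta \rangle = 0$ for every vertex $\beta$.

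This inner product unpacks directly from the definition. The vector $\mathbf{v}_\beta$ is supported on the edges incident to $\beta$, equal to $+1$ on the edges joining $\beta$ to the `successor' partite class (in the fixed cyclic order) and $-1$ on the edges joining $\beta$ to the `predecessor' partite class. Hence $\langle \vj_G, \mathbf{v}_\beta \rangle$ equals the difference between the number of $G$-neighbours of $\beta$ in these two respective partite classes. The locally balanced hypothesis says exactly that these two numbers coincide for every $\beta$, so the inner product vanishes, as required.

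The main obstacle, modest as it is, is simply the bookkeeping linking the principal-submatrix action $K[G]\vj$ to the action of $K$ on the zero-extension $\vj_G$; once that identification is in hand, local balance plugs in immediately. The nontrivial role of this proposition lies downstream, where it licenses the kernel-elimination trick, previously applied only to the full matrix $M$, to be imported to the subgraph matrix $M_G$.
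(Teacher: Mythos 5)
Your proof is correct and follows essentially the same route as the paper's: show $\vj_G$ is orthogonal to $\ker M$ (so $K\vj_G = \vo$), then restrict to $E(G)$. You merely make explicit the inner-product computation $\langle \vj_G, \mathbf{v}_\beta\rangle = 0$ via local balance and the restriction bookkeeping, both of which the paper states without elaboration.
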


\begin{proof}
Since $G$ is locally balanced, it is orthogonal to every vector in $\ker(M)$.  Therefore, $K \vj_G=\vo$.  The claim follows by restricting to $G$.
\end{proof}

Next, we have an important orthogonality relation.

\begin{prop}
\label{kernel2}
With $K$ and $M_G$ defined as above, $K[G] M_G  = O$.
\end{prop}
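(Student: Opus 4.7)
The plan is to prove $K[G]M_G = O$ by checking columns one at a time, and relating the columns of $M_G$ (which live in $\Omega(G)$) to vectors in the larger space $\Omega(K_{n,n,n})$ on which $K$ acts. Specifically, for each $f \in E(G)$, let $\mathbf{m}_f \in \R^{E(G)}$ be the $f$-th column of $M_G$, and let $\tilde{\mathbf{m}}_f \in \Omega(K_{n,n,n})$ be its zero-extension. A short index-chase shows that the $e$-entry of $K[G]\mathbf{m}_f$ equals the $e$-entry of $K\tilde{\mathbf{m}}_f$ for every $e \in E(G)$. So it suffices to prove $K\tilde{\mathbf{m}}_f = \vo$, and since $K$ is orthogonal projection onto $\ker(M) = \ker(W^\top)$, this reduces to showing $\tilde{\mathbf{m}}_f \perp \mathbf{v}_\beta$ for every vertex $\beta \in V(K_{n,n,n})$.

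Next, I would unwind the inner product using $M_G = W_G W_G^\top$. For fixed $\beta$ and $f$,
\[
\mathbf{v}_\beta^\top \tilde{\mathbf{m}}_f \;=\; \sum_{g \in E(G)} \mathbf{v}_\beta(g)\, M_G(g,f) \;=\; \sum_{g \in E(G)} \mathbf{v}_\beta(g) \sum_{\substack{t \in T(G) \\ g,f \in t}} 1.
\]
Swapping the order of summation over $g$ and $t$, and using the fact that every edge of a triangle $t \in T(G)$ lies in $E(G)$, this becomes
\[
\sum_{\substack{t \in T(G)\\ f \in t}} \;\sum_{g \in t} \mathbf{v}_\beta(g) \;=\; \sum_{\substack{t \in T(G)\\ f \in t}} (W^\top \mathbf{v}_\beta)(t).
\]

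The final step just invokes $\mathbf{v}_\beta \in \ker(W^\top)$, which was established in the discussion preceding the proposition; each inner summand vanishes, so the whole expression is zero. This gives $\tilde{\mathbf{m}}_f \perp \ker(M)$ for every $f \in E(G)$, hence $K\tilde{\mathbf{m}}_f = \vo$, and restricting to coordinates in $E(G)$ yields $K[G]\mathbf{m}_f = \vo$, i.e.\ the $f$-th column of $K[G]M_G$ is zero.

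The only subtle point, and the step I would double-check carefully, is the identification $(K[G]\mathbf{m}_f)(e) = (K\tilde{\mathbf{m}}_f)(e)$ for $e \in E(G)$. This uses that $\tilde{\mathbf{m}}_f$ is padded with zeros outside $E(G)$, so the sum defining $(K\tilde{\mathbf{m}}_f)(e)$ runs only over $g \in E(G)$, matching the definition of $K[G]$ as the principal submatrix of $K$ on $\Omega(G)$. Everything else is a routine sum swap combined with the kernel description from the paragraph just above the proposition, so there is no real obstacle beyond bookkeeping.
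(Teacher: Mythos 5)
Your proposal is correct, and it rests on the same underlying fact as the paper's proof, namely $KW = O$ (equivalently $W^\top \mathbf{v}_\beta = \vo$). The paper packages this as a block-matrix identity: writing $L$ for the inclusion $\Omega(G)\hookrightarrow\Omega(K_{n,n,n})$ and sorting rows/columns of $W$ so that $E(G),T(G)$ come first, it observes $LW_G = W\bigl[\begin{smallmatrix} I \\ O\end{smallmatrix}\bigr]$ and hence $K[G]M_G = L^\top K W\bigl[\begin{smallmatrix}I\\O\end{smallmatrix}\bigr]W_G^\top = O$. You instead unfold this identity column by column: you zero-extend a column $\mathbf{m}_f$ of $M_G$, verify $(K[G]\mathbf{m}_f)(e)=(K\tilde{\mathbf{m}}_f)(e)$ for $e\in E(G)$, and then show $\tilde{\mathbf{m}}_f\perp\ker(M)$ by an explicit double-sum swap against the spanning set $\{\mathbf{v}_\beta\}$, using $W^\top\mathbf{v}_\beta=\vo$. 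The key structural observation --- that $T(G)\subseteq T(K_{n,n,n})$ and every edge of a $t\in T(G)$ lies in $E(G)$, so columns of $LW_G$ are genuine columns of $W$ --- appears in both proofs, just expressed as a block structure in the paper and as your sum swap. Your version is a bit more pedestrian but has the advantage of making the role of the kernel vectors $\mathbf{v}_\beta$ completely explicit; the paper's is shorter and makes the generalization to $r$-cliques and hypergraphs (which the authors remark on immediately afterward) more transparent. No gaps.
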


\begin{proof}
Let $L$ denote the inclusion map from $\Omega(G)$ to $\Omega(K_{n,n,n})$.
As a matrix, assuming rows are organized, we have
$$L=\left[
\begin{array}{c}
I\\
\hline
O
\end{array}
\right].
$$
Right multiplication by $L$ restricts to columns indexed by $E(G)$.
In particular, $K[G]=L^\top K L$.
Also, if we sort the rows and columns of $W$ so that $E(G)$ and $T(G)$ come
first, then we have
$$W=
\left[
\begin{array}{c|c}
W_G & *\\
\hline
 O & *
\end{array}
\right].
$$
Using these observations, we compute
\begin{equation*}
K[G] M_G = L^\top K L W_G W_G^\top = L^\top K W
\left[
\begin{array}{c}
I_{|T(G)|} \\
\hline
O\\
\end{array}
\right] W_G^\top = O.\qedhere
\end{equation*}
\end{proof}

\rk
Propositions~\ref{rank}, \ref{kernel1} and \ref{kernel2} are straightforward to extend to the setting of
$r$-cliques in $r$-partite graphs and even to hypergraphs.  We omit the details.

The preceding facts feed into the following result, which roughly states
that solutions to a symmetric linear system are unchanged if the coefficient
matrix undergoes an orthogonal shift.
\begin{lemma}
\label{shift}
Let $A$ and $B$ be Hermitian $N \times N$ matrices with $AB=O$ and $A+B$
nonsingular.  Suppose also that $B \vb = \vo$. Then $A(A+B)^{-1} \vb = \vb$.
\end{lemma}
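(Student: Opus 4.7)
My plan is to show that $A$ commutes with $(A+B)^{-1}$, after which the desired identity falls out in one line from $B\vb = \vo$.

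The first step is to observe that the hypotheses force $BA = O$ as well. Indeed, since $A$ and $B$ are Hermitian, $BA = B^*A^* = (AB)^* = O^* = O$. Thus $A$ and $B$ annihilate each other from both sides.

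Next I would check that $A$ commutes with $A+B$. This is immediate:
\begin{equation*}
A(A+B) = A^2 + AB = A^2 = A^2 + BA = (A+B)A.
\end{equation*}
Since $A+B$ is nonsingular by hypothesis, standard linear algebra then gives $A(A+B)^{-1} = (A+B)^{-1}A$.

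With this in hand, the conclusion follows by rewriting $A\vb$ using $B\vb = \vo$:
\begin{equation*}
A(A+B)^{-1}\vb = (A+B)^{-1}A\vb = (A+B)^{-1}(A+B)\vb = \vb,
\end{equation*}
where the middle equality uses $A\vb = A\vb + B\vb = (A+B)\vb$. There is no real obstacle here; the only nontrivial ingredient is the passage from $AB = O$ to $BA = O$, which relies essentially on the Hermitian hypothesis. Without it, the commutation step would fail and $A(A+B)^{-1}$ need not act as the identity on $\ker B$.
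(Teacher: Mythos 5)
Your proof is correct, and it takes a genuinely more elementary route than the paper's. The paper invokes the spectral theorem: it observes that $A$ and $B$ generate a commutative algebra of Hermitian matrices (again using $AB=O \Rightarrow BA=O$), passes to a common orthonormal eigenbasis, writes $A=\sum_{i\le r}a_iE_i$ and $A+B=\sum_{i\le N}a_iE_i$ in terms of the rank-one projections $E_i$, deduces $E_j\vb=\vo$ for $j>r$ from $B\vb=\vo$, and then computes $A(A+B)^{-1}\vb=(E_1+\dots+E_r)\vb=\vb$. Your argument reaches the same conclusion with no spectral decomposition at all: once $BA=O$ is in hand, the identity $A(A+B)=A^2=(A+B)A$ gives commutation with the inverse directly, and the rest is a one-line manipulation using $A\vb=(A+B)\vb$. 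Your version also makes transparent exactly where the Hermitian hypothesis matters — namely only to derive $BA=O$ from $AB=O$ — so the lemma holds verbatim for any matrices satisfying $AB=BA=O$ with $A+B$ invertible, a slight gain in generality that the paper's spectral phrasing obscures. For the paper's purposes the spectral decomposition is not wasted effort (the same orthogonal-idempotent machinery is reused heavily in Sections 4 and 5), but as a standalone proof of the lemma your route is shorter and cleaner.
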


\begin{proof}
The matrices $A$ and $B$ generate a commutative algebra of Hermitian
matrices; hence, they admit a common basis
$\{\mathbf{e}_1,\dots,\mathbf{e}_N \}$ of orthonormal eigenvectors.  For
$i=1,\dots,N$, put $E_i = \mathbf{e}_i\mathbf{e}_i^*$, a rank one
projection.  We have $\sum_{i=1}^N E_i = I$ and $E_iE_j=O$ for $i \neq j$.

Suppose
$A=a_1 E_1+\dots+a_r E_r$ and $A+B=a_1 E_1 + \dots + a_N E_N$ for nonzero
coefficients $a_i$.  Since $B \vb=\vo$, we have $E_j \vb = \vo$ for $r<j \le
N$.  With this, we compute
\begin{equation*}
A(A+B)^{-1} \vb = (a_1 E_1 + \dots + a_r E_r)
(a_1^{-1} E_1 + \dots + a_N^{-1} E_N) \vb = (E_1+\dots+E_r) \vb =
\vb.\qedhere
\end{equation*}
\end{proof}

We apply Lemma~\ref{shift} by putting $A=M_G$, $B=\eta K[G]$ (note that both
are symmetric with real entries), and $\vb=\vj$.   We show later that $M_G+
\eta K[G]$ is nonsingular for $\eta \neq 0$ under our minimum degree
assumption for $G$.  It follows by Propositions~\ref{kernel1} and
\ref{kernel2} that the (unique) solution of $(M_G+\eta K[G]) \vx = \vj$ also
provides a solution of $M_G \vx = \vj$.  The next section develops some
tools to ensure such a solution $\vx$ is nonnegative, thereby giving a
fractional decomposition of $G$ into fans.

\section{Norms}
\label{sec-norms}

Here we review some basic facts concerning vector and matrix norms.  The end goal is
a sufficient condition for certain linear systems to have a nonnegative
solution.  Further details and discussion can be found in \cite[Chapter
5]{MatrixAnalysis}.

A \emph{matrix norm} is a function $|| \cdot ||$ from (complex-valued)
matrices of a given shape to the nonnegative reals satisfying: (1) $||A||=0$
if and only if $A=O$, (2) $||\alpha A||= |\alpha| \; ||A||$ for scalars
$\alpha$, and (3) the triangle inequality $||A+B|| \le ||A|| + ||B||$.

For $\vx \in \C^N$, and $p \ge 1$, recall the vector $p$-\emph{norm}
\begin{equation}
\label{vec-p-norm}
||\vx||_p = \left( \sum_{i=1}^N |x_i|^p \right)^{1/p}.
\end{equation}
With $p=\infty$, we take the special (and limiting) definition
$||\vx||_\infty = \max \{|x_i| : i=1,\dots,N\}$ instead of
(\ref{vec-p-norm}).  By Minkowski's inequality, these are
matrix norms on $N \times 1$ columns for each $p$.

More generally, the matrix norm on $\C^{N \times N}$ \emph{induced by} the
$p$-norm is given by
$$||A||_p := \max_{\vx \neq 0} \frac{||A \vx||_p}{||\vx||_p}.$$
It is straightforward to check that the matrix $p$-norm (in fact any induced
norm) is submultiplicative.
\begin{prop}
\label{submult}
Let $A,B \in \C^{N \times N}$.  Then $||AB||_p \le ||A||_p ||B||_p$.
\end{prop}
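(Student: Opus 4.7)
The plan is to first extract the standard \emph{consistency} inequality $\|A\vy\|_p \le \|A\|_p\,\|\vy\|_p$ directly from the definition of the induced norm, and then apply it twice.

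First I would observe that for any nonzero $\vy \in \C^N$, the ratio $\|A\vy\|_p / \|\vy\|_p$ is at most $\|A\|_p$ by the very definition of $\|A\|_p$ as a maximum; multiplying through by $\|\vy\|_p$ (and noting that the inequality is trivial when $\vy = \vo$) gives $\|A\vy\|_p \le \|A\|_p\,\|\vy\|_p$ for every $\vy \in \C^N$.

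Next, for any nonzero $\vx$, I would apply this consistency inequality first with $\vy = B\vx$ and then with $\vy = \vx$ to obtain
\begin{equation*}
\|(AB)\vx\|_p = \|A(B\vx)\|_p \le \|A\|_p\,\|B\vx\|_p \le \|A\|_p\,\|B\|_p\,\|\vx\|_p.
\end{equation*}
Dividing by $\|\vx\|_p$ and taking the maximum over nonzero $\vx$ yields $\|AB\|_p \le \|A\|_p\,\|B\|_p$.

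There is no real obstacle here; the argument is entirely formal and requires only the definition of the induced $p$-norm together with the observation that the consistency inequality extends to the zero vector trivially. The only minor point worth flagging is that the maximum in the definition is in fact attained (by compactness of the unit sphere in $\C^N$ and continuity of $\vx \mapsto \|A\vx\|_p$), so one can freely work with $\max$ rather than $\sup$ throughout.
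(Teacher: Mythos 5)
Your argument is correct and is the standard proof of submultiplicativity for induced norms: extract the consistency inequality $\|A\vy\|_p \le \|A\|_p\|\vy\|_p$ from the definition, apply it twice, divide, and take the supremum. The paper itself states this fact without proof (merely remarking that it is straightforward to check), so there is no paper proof to compare against; your proposal supplies exactly the argument one would expect, and your side remark about attainment of the maximum on the compact unit sphere is a harmless but correct piece of bookkeeping.
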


Here, we are mainly interested in the special case $||A||_\infty =
\max_i \sum_j |A(i,j)|$, the maximum absolute row sum of $A$.  It is worth mentioning, 
though, that the Euclidean norm on vectors induces $||A||_2$, the largest singular value of $AA^*$.  In
the case that $A$ is real symmetric (or Hermitian), this is simply the spectral radius $\rho(A)$.  
Proposition~\ref{submult} readily implies that $\rho(A)$ is a lower bound on any induced norm.  

\begin{prop}[See \cite{MatrixAnalysis}]
\label{cond}
Let $A \in \mathbb{C}^{N \times N}$ be invertible, and consider the system of equations $A \vx = \vb$.  Suppose $A+\delta A$ is a perturbation with $||A^{-1} \delta A ||_p <1$.  Then $A+\delta A$ is nonsingular and the unique solution $\vx + \delta\vx$ to the equation $(A+\delta A)(\vx + \delta \vx) = \vb$ has relative error
\begin{equation}
\label{est}
\frac{||\delta \vx||_p}{||\vx||_p} \le \frac{||A^{-1} \delta A||_p}{1-||A^{-1} \delta A||_p}.
\end{equation}
\end{prop}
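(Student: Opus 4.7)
The plan is to handle the two assertions in order: first that $A + \delta A$ is nonsingular, and then the explicit error bound in (\ref{est}).

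For nonsingularity, I would factor $A + \delta A = A(I + A^{-1}\delta A)$. Since $A$ is invertible by hypothesis, it suffices to invert $I + E$, where $E := A^{-1}\delta A$ satisfies $\|E\|_p < 1$. Submultiplicativity (Proposition~\ref{submult}) gives $\|E^k\|_p \le \|E\|_p^k$, so the Neumann series $\sum_{k \ge 0} (-E)^k$ converges absolutely (any induced $p$-norm dominates the entry-wise maximum, so convergence in norm implies entry-wise convergence) and furnishes an explicit inverse for $I + E$. Thus $A + \delta A$ is invertible and the perturbed system has a unique solution, so $\delta \vx$ is well-defined.

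For the quantitative bound, I would subtract $A\vx = \vb$ from $(A + \delta A)(\vx + \delta \vx) = \vb$ to obtain $A\, \delta \vx + \delta A\, \vx + \delta A\, \delta \vx = \vo$. Rearranging and left-multiplying by $A^{-1}$ yields
$$\delta \vx = -A^{-1} \delta A\, (\vx + \delta \vx).$$
Taking $p$-norms and using the triangle inequality together with submultiplicativity gives
$$\|\delta \vx\|_p \le \|A^{-1} \delta A\|_p \bigl( \|\vx\|_p + \|\delta \vx\|_p \bigr).$$
Writing $\rho := \|A^{-1}\delta A\|_p$, the hypothesis $\rho < 1$ lets me collect the $\|\delta \vx\|_p$ terms on the left and divide through by $(1-\rho)\|\vx\|_p$, yielding exactly (\ref{est}).

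I do not anticipate any real obstacle: this is the standard conditioning lemma from numerical linear algebra, and its proof is essentially bookkeeping around the identity $A + \delta A = A(I + A^{-1}\delta A)$. The one point worth flagging is that Proposition~\ref{submult} is invoked twice, once implicitly to justify convergence of the Neumann series defining $(I+E)^{-1}$, and once explicitly to pass from the vector identity for $\delta\vx$ to the scalar inequality that is then solved.
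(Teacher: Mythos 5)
Your proof is correct, and the nonsingularity argument (factor $A+\delta A = A(I+E)$ with $E=A^{-1}\delta A$, then invert $I+E$ by the Neumann series since $\|E\|_p<1$) is exactly what the paper does. For the quantitative bound, though, you take a slightly different route. The paper expands $\delta\vx=(A+\delta A)^{-1}\vb-A^{-1}\vb=\bigl[(I+E)^{-1}-I\bigr]\vx=\sum_{k\ge 1}(-E)^k\vx$ and then applies the triangle inequality term by term, giving $\|\delta\vx\|_p\le\sum_{k\ge1}\|E\|_p^k\,\|\vx\|_p=\frac{\|E\|_p}{1-\|E\|_p}\|\vx\|_p$ directly as a geometric sum. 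You instead subtract the two linear systems to get the implicit identity $\delta\vx=-E(\vx+\delta\vx)$, take norms to obtain the self-referential inequality $\|\delta\vx\|_p\le\|E\|_p(\|\vx\|_p+\|\delta\vx\|_p)$, and solve for $\|\delta\vx\|_p$. The payoff of your version is that it sidesteps any explicit summation for the bound (the series is invoked only once, for invertibility), at the minor cost of having to observe that $\|\delta\vx\|_p$ is finite so that the $(1-\|E\|_p)\|\delta\vx\|_p\le\|E\|_p\|\vx\|_p$ rearrangement is legitimate. Both proofs are standard; yours is a perfectly good alternative and yields the identical constant.
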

This can be proved by expanding $\delta \vx = (A+\delta A)^{-1} \vb - A^{-1} \vb$ as a geometric series, and applying the triangle inequality.  See \cite[\S 5.8]{MatrixAnalysis} for more details of the proof.

Working from this, we note that the existence of nonnegative solutions to certain square linear systems can be verified using the $\infty$-norm.  Here is the instance we shall use.

\begin{cor}
\label{nonneg}
Suppose a nonnegative constant vector $\vx$ solves the square system $A \vx = \vb$ in Proposition~\ref{cond}.  Then the solution $\vy$ to $(A+\delta A)\vy = \vb$ is entrywise nonnegative if $||A^{-1} \delta A||_\infty \le \frac{1}{2}$.
\end{cor}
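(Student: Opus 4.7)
The plan is to apply Proposition~\ref{cond} directly with $p = \infty$, writing $\vy = \vx + \delta\vx$.  Since $\vx$ is a nonnegative constant vector, we have $\vx = c\vj$ for some $c \ge 0$.  The key observation is that for such a vector every entry equals both its $\infty$-norm and its minimum entry; this is precisely why the $\infty$-norm is the right choice for preserving nonnegativity under perturbation.

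Assuming $\|A^{-1} \delta A\|_\infty \le \tfrac{1}{2}$, which in particular is strictly less than $1$ so that Proposition~\ref{cond} applies, the error estimate (\ref{est}) with $p = \infty$ would yield
$$\frac{\|\delta \vx\|_\infty}{\|\vx\|_\infty} \le \frac{1/2}{1 - 1/2} = 1,$$
and hence $\|\delta \vx\|_\infty \le c$.  Every coordinate of $\delta \vx$ therefore lies in the interval $[-c, c]$, so every coordinate of $\vy = \vx + \delta\vx$ is at least $c - c = 0$, giving $\vy \ge \vo$ as required.

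There is no real obstacle here beyond selecting the correct norm; the corollary is essentially an immediate specialization of the preceding proposition.  For a non-constant solution $\vx$, the same argument would fail: an $\infty$-norm bound on $\delta\vx$ only controls the worst-case deviation relative to the \emph{largest} entry of $\vx$, so a small coordinate of $\vx$ could still be perturbed below zero.  The constancy hypothesis is exactly what guarantees that no such coordinate exists, and that is the only subtlety worth flagging in the write-up.
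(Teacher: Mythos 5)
Your proof is correct and is essentially the paper's own argument: apply Proposition~\ref{cond} with $p=\infty$, use the hypothesis to bound the relative error by $1$, and conclude each entry of $\vy=\vx+\delta\vx$ is nonnegative because $\vx$ is constant. The paper simply normalizes to $\vx=\vj$ rather than carrying the scalar $c$, which is the same thing (and, as you note, $c=0$ is trivial since then $\vb=\vo$).
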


\begin{proof}
Without loss of generality, suppose $\vx= \vj$, the all ones vector.  By Proposition~\ref{cond} and our norm assumption,  
$$||\delta \vx||_\infty \le \frac{||A^{-1} \delta A||_\infty}{1-||A^{-1} \delta A||_\infty} \le 1.$$
It follows that the entries of $\vy=\vx+\delta \vx$ are between $0$ and $2$.
\end{proof}

\rk
In view of Proposition~\ref{submult}, the conclusion also holds if $||A^{-1}||_\infty \cdot ||\delta A||_\infty \le \frac{1}{2}$.

In some sense, this is the main engine for our argument.  Recall that in
Section~\ref{sec-sys} we had set up a matrix $A_G=M_G+\eta K[G]$ so that $G$
has a fractional decomposition into fans if and only if $A_G \vx = \vj$ has a
solution $\vx \ge \vo$.  Using Corollary~\ref{nonneg}, our proof amounts to 
upper-bounding two matrix norms: a perturbation (from our mindegree assumption), 
and $A^{-1}$, which can be obtained explicitly.

\section{A Bose-Mesner algebra}
\label{sec-assoc}

The main purpose of this section is to compute the inverse of $A=M+\eta K$,
where recall that $M \in \mathbb{C}^{N \times N}$ is a symmetric matrix counting
triangles in $\Omega(K_{n,n,n})$, $K$ is the orthogonal projection onto
$\ker(M)$, and $\eta \neq 0$ is a real parameter.  This is aided by showing
that $A$ lives in a low-dimensional algebra, which we can compute
explicitly.  We begin with some background.

A symmetric $k$-{\em class association scheme} on a set $\cX$ consists of
$k+1$
nonempty symmetric binary relations $R_0,\dots,R_k$ which partition $\cX
\times \cX$, such that
\begin{itemize}
\item
$R_0$ is the identity relation, and
\item
for any $x,y \in \cX$ with $(x,y) \in R_h$, the number of $z \in \cX$ such
that $(x,z) \in R_i$ and $(z,y) \in R_j$ is the
{\em structure constant}
$a^h_{ij}$
depending only on $h,i,j$.  In particular, each $R_i$ is a regular graph of
degree $\nu_i:= a^0_{ii}$ (with $R_0$ consisting of isolated loops).
\end{itemize}

Let $|\cX|=N$. For $i=0,\dots,k$, define the $N \times N$ {\em adjacency
matrix} $A_i$, indexed by entries of $\cX$, to have $(x,y)$-entry equal to $1$ if $(x,y) \in R_i$, and $0$ otherwise. We say
that $x$ and $y$ are $i$th {\em associates} when $(x,y) \in R_i$.  Since the relations partition $\cX \times \cX$, we have
$A_0+A_1+\dots+A_k =J$, the all-ones matrix.

By definition of the structure constants,
\begin{equation}
\label{struct}
A_i A_j = \sum_{h=0}^k a^h_{ij} A_h.
\end{equation}
In this way, the adjacency matrices span a commutative algebra of symmetric
matrices called the
{\em Bose-Mesner algebra} of $\cX$.  We write $\mathfrak{A}=\langle
A_0,A_1,\dots,A_k \rangle$.

\begin{ex}
The {\em Johnson scheme} $J(k,v)$ has as elements $\binom{[v]}{k}$.  Subsets
$K,L \in \binom{[v]}{k}$ are declared
to be $i$th associates if and only if $|K \cap L|=k-i$.
\end{ex}

\begin{ex}
The \emph{Hamming scheme} $H(k,n)$ has as elements $[n]^k$.  Two such words
are declared to be $i$th associates if and only if their Hamming distance
equals $i$.
\end{ex}

More generally, the \emph{Hamming lattice} has ground set
$\mathcal{H}_{k,n}=([n] \cup \{*\})^k$, elements of which we call
\emph{subwords}.  The partial order $\preceq$ is defined by `inclusion';
that is, $x \preceq y$ if and only if, for all $i$, we have $x_i \in
\{y_i,*\}$.  Then $\mathcal{H}_{k,n}$ is a regular meet semilattice,
\cite{Delsarte-semilattice}.  The \emph{rank} of a subword $x$ is $|\{i: x_i
\neq *\}|$, and the set of subwords of rank $r$ is the $r$th \emph{level} of
the Hamming lattice.

We investigate the Hamming scheme itself in more generality in
Section~\ref{hyper}.  Here, though, we consider the case of triangle
decompositions as a concrete starting point.  The vertices, edges, and
triangles in $K_{n,n,n}$ correspond with the elements of rank $1,2,3$,
respectively, in $\mathcal{H}_{3,n}$.  Our matrix $W$ is simply the
incidence matrix of the second level versus the third level.  Accordingly,
$M=WW^\top$ counts the elements above a given two elements in the second
level.

\begin{prop}
The second level of $\mathcal{H}_{3,n}$ is a symmetric $4$-class association
scheme.
\end{prop}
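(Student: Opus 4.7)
The plan is to identify the second level of $\mathcal{H}_{3,n}$ with $E(K_{n,n,n})$, write down five symmetric relations partitioning the ordered pairs of rank-$2$ subwords, and use a group action to verify the structure-constant condition.

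A subword of rank $2$ has exactly one $*$, which singles out a pair of partite classes in $K_{n,n,n}$, and the two filled coordinates pin down an edge joining vertices in those classes. Under this identification I would take $R_0$ to be equality; $R_1, R_2$ the relations ``same $*$-position and sharing, respectively, exactly one or exactly zero vertices''; and $R_3, R_4$ the relations ``different $*$-positions and sharing exactly one or exactly zero vertices''. Symmetry of each $R_h$ is immediate, and every ordered pair of rank-$2$ subwords belongs to exactly one relation (note that two edges lying in different pairs of partite classes necessarily meet in at most one vertex, in the unique common partite class, so $R_3$ and $R_4$ really do exhaust this case). A quick neighbor count from a fixed edge gives $1+2(n-1)+(n-1)^2+2n+2n(n-1)=3n^2$, matching the size of the second level.

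To show the structure constants $a^h_{ij}$ are well defined, I would exhibit a group acting on the second level that preserves each $R_h$ and is transitive on $R_h$ viewed as a set of ordered pairs. The natural choice is $G = S_n \wr S_3 = S_n^3 \rtimes S_3$, acting on $([n]\cup\{*\})^3$ by permuting letters within each coordinate and permuting the three coordinates (with $*$ fixed). This plainly preserves each level and each $R_h$. Transitivity follows by first using the $S_3$ factor to line up $*$-positions and then using $S_n^3$ to line up the shared and unshared vertices, giving essentially the same short case-check for each of $R_1,\dots,R_4$. Once transitivity is in hand, for any $(x,y),(x',y') \in R_h$ and any $g \in G$ with $g\cdot(x,y)=(x',y')$, the map $z \mapsto g \cdot z$ is a bijection between $\{z : (x,z) \in R_i,\ (z,y) \in R_j\}$ and its counterpart for $(x',y')$, so the count depends only on $h,i,j$. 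The one delicate point is $R_3$, whose members occupy three partite classes in a slightly asymmetric fashion; the $S_3$-factor in $G$ is precisely what collapses the apparently distinct cyclic configurations into a single orbit, so provided the five relations are defined symmetrically with respect to permutations of the partite classes (as above) rather than with a fixed cyclic orientation, the transitivity check goes through uniformly and I do not foresee a substantive obstacle.
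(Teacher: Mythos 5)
Your proposal is correct, and the five relations you define (equality; same $*$-position with one or zero shared vertices; different $*$-position with one or zero shared vertices) coincide with the paper's $R_0,\dots,R_4$, as does your degree check $1+2(n-1)+(n-1)^2+2n+2n(n-1)=3n^2$. Where you diverge from the paper is in how the structure-constant axiom is verified: the paper simply computes all $a^h_{ij}$ directly by case-by-case counting and records them in Table~\ref{table-struct}, whereas you invoke the Schurian (orbit-scheme) construction, showing that $S_n \wr S_3$ preserves each $R_h$ and acts transitively on each one as a set of ordered pairs, so that the counts $|\{z : (x,z)\in R_i,\ (z,y)\in R_j\}|$ are automatically constant on each $R_h$ by a conjugation argument. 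Your route is conceptually cleaner and shorter as a proof of the bare proposition; your remark about needing to define the relations invariantly under $S_3$ rather than with a fixed cyclic orientation is exactly the right subtlety, and is what makes the transitivity check on $R_3, R_4$ go through. The paper's more pedestrian counting is not wasted effort, though: the explicit values in Table~\ref{table-struct} are reused in Propositions~\ref{degrees} and \ref{idempotents} and in the norm estimate of Lemma~\ref{prodnorm}, so the paper would have had to do that bookkeeping anyway. Your argument establishes well-definedness of the $a^h_{ij}$ without producing their values; if you later need them, you would still have to count.
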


\begin{proof}
A subword of rank 2, say $\alpha \beta *$, can interact with other subwords
in five essentially distinct ways: $\alpha \beta *$, $\alpha \beta' *$,
$\alpha' \beta' *$, $* \beta \gamma$, or $*\beta' \gamma$.  Here, we mean
for each variable to be unequal to its dashed counterpart.  With these
defining relations $R_0,\dots,R_4$, it is straightforward to compute the
structure constants by counting.  See Table~\ref{table-struct} for a full
list of the nontrivial structure constants (recall $a^h_{ij}=a^h_{ji}$ and
$a^h_{i0}=1$ or $0$ according as $i=h$).
\end{proof}

\begin{table}
\footnotesize
\begin{center}
$
\begin{array}{c|cccc}
a_{ij}^0 & 1 & 2 & 3 & 4\\
\hline
1 & 2n-2 & 0 & 0 & 0 \\
2 &  & (n-1)^2 & 0 & 0 \\
3 & && 2n & 0 \\
4 & &&& n(2n-2)
\end{array}
\hspace{1cm}
\begin{array}{c|cccc}
a_{ij}^1 & 1 & 2 & 3 & 4\\
\hline
1 & n-2 & n-1 & 0 & 0 \\
2 &  & (n-1)(n-2) & 0 & 0 \\
3 & && n & n \\
4 & &&& n(2n-3)
\end{array}$

\smallskip
$
\begin{array}{c|cccc}
a_{ij}^2 & 1 & 2 & 3 & 4\\
\hline
1 & 2 & 2n-4 & 0 & 0 \\
2 &  & (n-2)^2 & 0 & 0 \\
3 & && 0 & 2n \\
4 & &&& n(2n-4)
\end{array}
\hspace{.5cm}
\begin{array}{c|cccc}
a_{ij}^3 & 1 & 2 & 3 & 4\\
\hline
1 & 0 & 0 & n-1 & n-1  \\
2 &  & 0 & 0 & (n-1)^2 \\
3 & && 1 & n-1 \\
4 & &&& (n-1)^2
\end{array}
\hspace{.5cm}
\begin{array}{c|cccc}
a_{ij}^4 & 1 & 2 & 3 & 4\\
\hline
1 & 0 & 0 & 1 & 2n-3  \\
2 &  & 0 & n-1 & (n-1)(n-2) \\
3 & && 1 & n-1 \\
4 & &&& (n-1)^2
\end{array}$
\end{center}
\normalsize
\medskip
\caption{Structure constants for the second level of $\mathcal{H}_{3,n}$}
\label{table-struct}
\end{table}

Let $A'_0,A'_1,\dots,A'_4$ be the adjacency matrices for relations
$R_0,R_1,\dots,R_4$ as described in the proof.
As an example calculation in Table~\ref{table-struct}, we have $A'_1 A'_3 =
(n-1)A'_3+A'_4$: there are exactly $n-1$ elements $\alpha' \beta *$ which
are simultaneously first associates with $\alpha \beta *$ and third
associates with $* \beta \gamma$, and there is exactly one element, namely
$\alpha \beta' *$, which is first associates with $\alpha \beta *$ and simultaneously
third associates with $* \beta' \gamma$.

It is worth highlighting the special case of the degrees $\nu_i$ for this
scheme.

\begin{prop}
\label{degrees}
For the second level of $\mathcal{H}_{3,n}$, degrees are $\nu_0=1$,
$\nu_1=2(n-1)$, $\nu_2=(n-1)^2$, $\nu_3=2n$, $\nu_4=2n(n-1)$.
\end{prop}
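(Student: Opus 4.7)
My plan is to prove Proposition~\ref{degrees} by a direct counting argument: fix a representative rank-2 subword $x = \alpha\beta*$, and for each $i \in \{0,1,2,3,4\}$ enumerate the subwords $y$ with $(x,y)\in R_i$. By the symmetry of the Hamming lattice under coordinate permutations and relabellings within each coordinate, it suffices to do this for a single choice of $x$.

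Since any $y$ is a rank-2 subword of length 3, it has its $*$ in some position. If $y$ also has $*$ in position 3, then $y = \alpha''\beta''*$, and $y$ is first, second, or third (in the sense of $R_0$) associate with $x$ based on how $(\alpha'',\beta'')$ compares to $(\alpha,\beta)$: exactly one such $y$ matches $x$ (giving $\nu_0=1$); $y$'s with exactly one coordinate different from $(\alpha,\beta)$ give $2(n-1)$ subwords (contributing to $\nu_1$); and $y$'s disagreeing with $x$ in both coordinates contribute $(n-1)^2$ to $\nu_2$. If instead $y$ has $*$ in position 1 or 2, then $y$ shares support with $x$ in exactly one non-$*$ position; writing out $y = *\beta''\gamma$ or $y = \alpha''*\gamma$, the relation $R_3$ (common value in the shared position) holds when $\beta''=\beta$, resp.\ $\alpha''=\alpha$, which gives $n$ subwords in each case, hence $\nu_3 = 2n$. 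The remaining subwords with support different from $x$ have $n-1$ choices in the shared position times $n$ choices for $\gamma$, per choice of the new $*$-position, giving $\nu_4 = 2n(n-1)$.

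As a consistency check, the degrees sum to
\[
1 + 2(n-1) + (n-1)^2 + 2n + 2n(n-1) = 3n^2,
\]
which matches $|\mathcal{H}_{3,n}\text{'s second level}| = 3n^2$, since there are $3$ positions for the $*$ and $n^2$ choices for the remaining entries. I do not anticipate a main obstacle; the only point requiring a touch of care is recognizing that when $y$ has support different from $x$ there are \emph{two} valid positions for $y$'s $*$, and each contributes symmetrically to $\nu_3$ and $\nu_4$.
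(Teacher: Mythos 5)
Your proof is correct and uses the same direct counting argument that the paper implicitly relies on: the paper simply reads the degrees off the $a^0_{ii}$ entries of Table~\ref{table-struct}, which it says are "straightforward to compute \dots by counting," and your case analysis (same support vs.\ different support, then Hamming distance or agreement on the shared coordinate) is exactly that computation made explicit, with the useful sanity check that the degrees sum to $3n^2$.
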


Observe that $M=WW^\top = nI + A'_3$.  In other words, in $K_{n,n,n}$, any
edge is contained in exactly $n$ triangles, while any two edges which are
third associates (of the form $\alpha \beta *$ and $* \beta \gamma$) are
contained in exactly one triangle (that being $\alpha \beta \gamma$).

In general, a Bose-Mesner algebra $\mathfrak{A}$ is commutative; see
(\ref{struct}) and the definition of the coefficients.  It follows that
$\mathfrak{A}$ has a common set of eigenspaces, and hence a basis of
orthogonal idempotents. In the case of the second level of
$\mathcal{H}_{3,n}$, the eigenspaces of our $M$ have a natural description.
Since a more thorough and general spectral analysis using the Hamming scheme
appears later, we merely sketch the concrete case for triangle
decompositions.

\begin{prop}
\label{evals}
The nonzero eigenvalues of $M$ are $\theta_0=3n$, $\theta_1=2n$, and
$\theta_2=n$.  Corresponding eigenvectors are given by
\vspace{-12pt}
\begin{itemize}
\item
$\vj$ $($unique up to multiples$)$ for $\theta_0$,
\item
$\sum_{\alpha} (\alpha \beta * - \alpha \beta' *) + \sum_{\gamma} (* \beta
\gamma - * \beta' \gamma)$ $($in total $3(n-1)$ independent vectors$)$ for
$\theta_1$, and
\item
$\alpha \beta *  - \alpha \beta' \mbox{$*$} - \alpha' \beta \mbox{$*$}  +
\alpha' \beta' *$ $($in total $3(n-1)^2$ independent vectors$)$ for
$\theta_2$.
\end{itemize}
\end{prop}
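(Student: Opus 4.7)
The plan is to verify each proposed eigenvector by direct computation and then use a dimension count to confirm the list is complete. Recall from the previous section that $M = nI + A'_3$, so for any $v \in \Omega(K_{n,n,n})$ and any edge $e$,
\[
(Mv)(e) \;=\; n\,v(e) + \sum_{f} v(f),
\]
where the sum runs over the $2n$ edges $f$ that share exactly one endpoint with $e$ and together with $e$ span a triangle (equivalently, $f$ places its other endpoint in the third partite set). Specializing to $v = \vj$ immediately yields $(Mv)(e) = n + 2n = 3n$.

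For $\theta_1 = 2n$, fix distinct $\beta,\beta'$ in the ``middle'' partite set and let $v$ be the vector displayed in the proposition. I would case-split on the six relevant edge-types for $e$. For $e = \alpha_0\beta*$ the extending edges are $\alpha_0*\gamma$ (with $v = 0$) and $*\beta\gamma$ (with $v = +1$), so the inner sum is $n$ and $(Mv)(e) = n + n = 2n\,v(e)$; the cases $e = \alpha_0\beta'*$, $e = *\beta\gamma_0$, and $e = *\beta'\gamma_0$ are symmetric. If $e$ avoids both $\beta$ and $\beta'$, then either all extending edges carry $v = 0$ (e.g.\ $e = \alpha_0\beta''*$ with $\beta''\notin\{\beta,\beta'\}$) or the $+1$ and $-1$ contributions cancel in pairs (e.g.\ $e = \alpha_0*\gamma_0$, where the $\beta$- and $\beta'$-indexed extending edges contribute $+2$ and $-2$ respectively). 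For $\theta_2 = n$, the vector $v = \alpha\beta* - \alpha\beta'* - \alpha'\beta* + \alpha'\beta'*$ is supported on a single pair of partite sets, while every triangle-extending $f$ uses the third partite set; hence $v(f) = 0$ for every such $f$, and $(Mv)(e) = n\,v(e)$ holds immediately on the support. For $e$ outside the support, either the extending edges are again outside the support or the four distinguished contributions pair up to zero.

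For multiplicities, cycling the role of the ``middle'' set through all three partite sets and, within each choice, fixing a reference vertex $\beta_0$ while letting $\beta'$ range over the other $n-1$ vertices, yields $3(n-1)$ candidate $\theta_1$-eigenvectors. Similarly, cycling over the three unordered pairs of partite sets and varying $(\alpha',\beta')$ over the $(n-1)^2$ non-reference choices in each pair produces $3(n-1)^2$ candidate $\theta_2$-eigenvectors. Adding the eigenvector $\vj$, the total is
\[
1 + 3(n-1) + 3(n-1)^2 \;=\; 3n^2 - 3n + 1 \;=\; n^3 - (n-1)^3,
\]
which by Proposition~\ref{rank} equals $\text{rank}(M)$; since $\dim\ker M = 3n-1$ fills the remaining dimensions, no nonzero eigenvalue can have been missed.

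The only genuine subtlety is the joint independence of the three $\theta_1$-families, whose supports overlap on each pair of partite sets, so a disjoint-supports argument fails. The cleanest way around this is not to argue independence directly, but to let the rank accounting enforce it: the displayed total must be attained with equality, so the $3(n-1)$ and $3(n-1)^2$ lower bounds on the individual multiplicities are tight and the candidate vectors are in fact independent. Everything else is mechanical case-checking.
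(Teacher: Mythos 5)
Your verification that $\vj$, $u_{\beta,\beta'}$, and the "double-difference" vectors are eigenvectors for $3n$, $2n$, $n$ respectively is correct, and it follows exactly the same direct-computation route as the paper's sketch (compute $(Mv)(e)=nv(e)+\sum_f v(f)$ over the $2n$ third-associate edges $f$ and case-split on the edge type of $e$). That part is fine.

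The gap is in the final multiplicity paragraph, and it is a real one: the rank accounting runs in the wrong direction. What Proposition~\ref{rank} gives you is an \emph{upper} bound on how much independent material you can have among eigenvectors for nonzero eigenvalues, namely
\[
1+\dim E_{2n}+\dim E_{n}+(\text{dimensions of any missed eigenspaces})
= \operatorname{rank}(M)=3n^2-3n+1.
\]
Exhibiting $3(n-1)$ candidate vectors in $E_{2n}$ does not give the lower bound $\dim E_{2n}\ge 3(n-1)$ unless you already know those vectors are independent --- which is precisely the point you are trying to bypass. If the $\theta_1$-family only spanned a space of dimension $d_1<3(n-1)$, the rank identity would still hold perfectly happily, with the slack absorbed either by larger $\theta_1$- or $\theta_2$-eigenspaces you haven't described or by additional nonzero eigenvalues you haven't found. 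So the sentence ``the $3(n-1)$ and $3(n-1)^2$ lower bounds on the individual multiplicities are tight'' assumes exactly what needs to be proved, and ``no nonzero eigenvalue can have been missed'' does not follow.

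For $\theta_2$ the disjoint-supports argument you allude to does close the gap: the three families live on pairwise disjoint coordinate blocks, and within one block the $(n-1)^2$ reference vectors are the standard basis for matrices with zero row and column sums. For $\theta_1$ you must argue independence across the three overlapping families directly. The clean way is to project a hypothetical dependence onto a single block $\Omega_{AB}$: there the $B$-family contributes a ``column-constant'' matrix with zero column sums and the $A$-family a ``row-constant'' matrix with zero row sums (the $C$-family vanishes), and the only matrix of both types is zero; this kills the $B$- and $A$-contributions, whence the $C$-contribution too. With that, $\dim E_{2n}\ge 3(n-1)$, and \emph{now} the rank count forces equality everywhere and excludes further eigenvalues. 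This independence step is what the paper is waving at with ``some obvious relations on the above vectors''; your write-up omits it rather than replacing it.
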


\begin{proof}[Proof sketch]
It is clear that $M \vj = 3n \vj$ since $\alpha \beta *$ extends in $n$ ways
on its own, and defines for each $\gamma$ exactly one common extension with
$\alpha \mbox{$*$} \gamma$ and $* \beta \gamma$.

Next, let $\mathbf{u}_{\beta,\beta'}$ denote the second given vector.  Since
there are exactly $n$ completions of each subword in the second level, we
have
$M \mathbf{u}_{\beta,\beta'} (\alpha \beta*) = 2n$ and $M
\mathbf{u}_{\beta,\beta'} (\alpha \beta'*) = -2n$.  Similar identities hold
for $*\beta \gamma$ and $*\beta' \gamma$, and otherwise $M
\mathbf{u}_{\beta,\beta'}$ vanishes.  So $M  \mathbf{u}_{\beta,\beta'} = 2n
\mathbf{u}_{\beta,\beta'}$ as desired.

From the third vector, we compute $$M(\alpha \beta *  - \alpha \beta' * -
\alpha' \beta * + \alpha' \beta' *) = n(\alpha \beta *  - \alpha \beta' * -
\alpha' \beta * + \alpha' \beta' *)$$
due to cancellation on all but the four given edges.  For instance, on $*
\beta \gamma$, we pick up $+1$ from $\alpha \beta *$ and $-1$ from $\alpha'
\beta *$.

Finally, the dimensions are as stated because of Proposition~\ref{rank} and
some obvious relations on the above vectors.
\end{proof}

Having these eigenspaces, computing the corresponding idempotents is
straightforward.  The key thing to note is that these idempotents live in
$\mathfrak{A}$, so they are linear combinations of the $A'_i$.

\begin{prop}
\label{idempotents}
With $I=A'_0,A'_1,\dots,A'_4$ as described above, orthogonal projections
onto the eigenspaces of $M$ for eigenvalues $\theta_0,\theta_1,\theta_2$
are, respectively
\begin{align*}
E_0 &= \frac{1}{3n^2} A'_0 +  \frac{1}{3n^2} A'_1 +  \frac{1}{3n^2} A'_2 +
\frac{1}{3n^2} A'_3 +  \frac{1}{3n^2} A'_4,\\
E_1 &= \frac{n-1}{n^2} A'_0 + \frac{n-2}{2n^2} A'_1 - \frac{1}{n^2} A'_2 +
\frac{n-1}{2n^2} A'_3 - \frac{1}{2n^2} A'_4,~~\text{and}\\
E_2 &= \frac{(n-1)^2}{n^2} A'_0 - \frac{n-1}{n^2} A'_1  + \frac{1}{n^2} A'_2.
\end{align*}
Orthogonal projection onto the kernel of $M$ is given by $K=I-E_0-E_1-E_2$.
\end{prop}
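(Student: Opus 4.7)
The plan is to exploit the algebraic setting. Since $\mathfrak{A}$ is a commutative algebra of symmetric matrices, it is semisimple, and its primitive idempotents refine the eigenspaces of every element of $\mathfrak{A}$; in particular, each spectral projector of $M=nA'_0+A'_3$ is itself an element of $\mathfrak{A}$. Consequently, each $E_i$ may be written a priori as a linear combination of $A'_0,\ldots,A'_4$, and it remains only to pin down the coefficients.

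The easiest idempotent is $E_0$. The $\theta_0$-eigenspace is one-dimensional and spanned by $\vj$, so $E_0 = \vj\vj^\top/\|\vj\|_2^{\,2} = J/(3n^2)$, where $J$ is the all-ones matrix on $\Omega(K_{n,n,n})$. Since the relations $R_0,\ldots,R_4$ partition $\mathcal{X}\times \mathcal{X}$, we have $J = A'_0+A'_1+A'_2+A'_3+A'_4$, yielding the claimed formula directly. For $E_1$ and $E_2$, the plan is to apply Lagrange--Sylvester interpolation: since $M$ has exactly four distinct eigenvalues $\{3n,2n,n,0\}$,
$$E_i = \prod_{\theta\neq \theta_i} \frac{M - \theta I}{\theta_i - \theta}, \qquad i=0,1,2,$$
with $\theta$ running over the other three distinct eigenvalues. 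Substituting $M = nA'_0+A'_3$ turns each right-hand side into a cubic polynomial in $A'_3$. A single round of structure constants from Table~\ref{table-struct} gives
$$(A'_3)^2 = 2nA'_0 + nA'_1 + A'_3 + A'_4,$$
and a second application produces $(A'_3)^3$ in the basis $\{A'_0,\ldots,A'_4\}$. Expanding the cubic polynomials and collecting coefficients then reads off the formulas for $E_1$ and $E_2$. The equation $K = I - E_0 - E_1 - E_2$ is forced by the orthogonal decomposition of $\mathbb{R}^{3n^2}$ into the four eigenspaces of $M$.

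A convenient consistency check at every stage is the trace identity $\operatorname{tr}\bigl(\sum_j c_j A'_j\bigr) = 3n^2 c_0$, which holds because only $A'_0=I$ has nonzero diagonal; the coefficient of $A'_0$ in each $E_i$ must therefore equal $m_i/(3n^2)$, where $m_0=1$, $m_1=3(n-1)$, $m_2=3(n-1)^2$ are the multiplicities obtained in Proposition~\ref{evals}. (The proposed coefficients $1/(3n^2)$, $(n-1)/n^2$, $(n-1)^2/n^2$ agree on inspection.) The main obstacle is purely bookkeeping: propagating 25 structure constants through a cubic expansion in a five-dimensional algebra is tedious but entirely mechanical. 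If one prefers, the candidate formulas can be verified in the reverse direction by substituting them into $ME_i$ and checking $ME_i=\theta_i E_i$ one basis element at a time from the same multiplication table.
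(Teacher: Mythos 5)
Your proposal is correct, and it takes a genuinely different route from the paper. The paper does not actually carry out the computation in Section~4: it merely asserts that the formulas can be verified directly by checking $E_i\mathbf{e}_j = \delta_{ij}\mathbf{e}_j$ on the eigenvectors from Proposition~\ref{evals}, and then defers to Section~\ref{sec-generalize}, where the idempotents of the full Hamming scheme $H(k,n)$ are expressed through Krawtchouk polynomials via $E_j = n^{-k}\sum_i \kappa_j(i)A_i$ and then pushed down to the second level through $E_j' = \theta_j^{-1}WE_jW^\top$. Your approach instead observes that $M = nA'_0 + A'_3$ has exactly four distinct eigenvalues $\{3n,2n,n,0\}$ and applies the Lagrange--Sylvester interpolation formula $E_i = \prod_{\theta\neq\theta_i}(M-\theta I)/(\theta_i-\theta)$, reducing the problem to expanding a cubic polynomial in $A'_3$ inside the Bose--Mesner algebra. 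Your reading of the structure constants is correct ($(A'_3)^2 = 2nA'_0 + nA'_1 + A'_3 + A'_4$ from Table~\ref{table-struct}), the $E_0 = J/(3n^2)$ shortcut is clean, and the trace consistency check against the multiplicities $1$, $3(n-1)$, $3(n-1)^2$ from Proposition~\ref{evals} is a nice sanity test. What your method buys is a self-contained verification that stays entirely within Section~4 and avoids the Krawtchouk machinery; what the paper's Section~6 approach buys is uniformity, since the same polynomial identities handle all $k$ and $t$ at once, which matters for Section~\ref{sec-generalize}.
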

It is possible, though tedious, to verify Proposition~\ref{idempotents} by a direct computation $E_i
\mathbf{e}_j = \delta_{ij} \mathbf{e}_j$, where
$\mathbf{e}_0,\mathbf{e}_1,\mathbf{e}_2$ are the eigenvectors from
Proposition~\ref{evals}.  But we omit details, since a more concise and general approach
using certain orthogonal polynomials is given in Section~\ref{sec-generalize}.

\section{Proof of the main result}
\label{sec-pf}

We wish to solve (\ref{system-fan}), whose coefficient matrix $M_G$ is close
to $M= nI + A'_3 = \theta_0 E_0 + \theta_1 E_1 + \theta_2 E_2$.
With $K$ denoting projection onto the kernel of $M$, we know that $M+\eta K$
is nonsingular for all  $\eta \neq 0$.  We begin by estimating its inverse
for a special choice of $\eta$.
\begin{lemma}
\label{inf-norm}
With $\eta^* = 2n$ and $A=M+\eta^* K$,
$$||A^{-1}||_\infty \le \frac{23}{9n} + O(n^{-2}).$$
\end{lemma}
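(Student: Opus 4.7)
The plan is to diagonalize $A$ via the spectral decomposition of $M$, invert term by term, and then exploit the fact that $A^{-1}$ lies in the Bose--Mesner algebra $\mathfrak{A}$ so that its $\infty$-norm can be read off directly from the degrees $\nu_0,\dots,\nu_4$ of the scheme. Because $\mathfrak{A}$ acts vertex-transitively on rows, every row of $A^{-1}$ contributes the same absolute sum, so no optimization over rows is required.

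By Proposition~\ref{idempotents}, $E_0,E_1,E_2,K$ are pairwise orthogonal idempotents summing to $I$, and $M = 3n E_0 + 2n E_1 + n E_2$. Hence $A = 3n E_0 + 2n E_1 + n E_2 + 2n K$ inverts term by term to
$$A^{-1} = \frac{1}{3n} E_0 + \frac{1}{2n} E_1 + \frac{1}{n} E_2 + \frac{1}{2n} K.$$
The choice $\eta^* = 2n = \theta_1$ is engineered so that the $E_1$-coefficient matches $1/\eta^*$; substituting $K = I - E_0 - E_1 - E_2$ cancels the $E_1$ term and yields the clean identity
$$A^{-1} = \frac{1}{2n} I - \frac{1}{6n} E_0 + \frac{1}{2n} E_2.$$

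I then substitute the expressions for $E_0$ and $E_2$ from Proposition~\ref{idempotents} and collect terms to write $A^{-1} = \sum_{i=0}^{4} c_i(n)\, A'_i$ for explicit rational coefficients $c_i(n)$. Since the relations $R_0,\dots,R_4$ partition the index pairs, every row of $A^{-1}$ contains exactly $\nu_i$ entries equal to $c_i(n)$ for each $i$. Consequently
$$\|A^{-1}\|_\infty = \sum_{i=0}^{4} |c_i(n)|\, \nu_i,$$
with the $\nu_i$ supplied by Proposition~\ref{degrees}.

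Finally, a short sign check shows $c_0, c_2 > 0$ while $c_1, c_3, c_4 < 0$ for all $n \ge 1$, so the absolute values pose no difficulty. Expanding in powers of $1/n$, the contributions $|c_0|\nu_0, |c_1|\nu_1, |c_2|\nu_2, |c_4|\nu_4$ all scale as $\Theta(1/n)$ with leading coefficients $1, 1, \tfrac{4}{9}, \tfrac{1}{9}$ respectively, while $|c_3|\nu_3$ is only $O(n^{-2})$. Summing gives $(1 + 1 + \tfrac{4}{9} + \tfrac{1}{9})/n = 23/(9n)$ at leading order, which is the desired bound. The main obstacle is purely bookkeeping: verifying the signs hold for every $n$ rather than only asymptotically, and tracking the $O(n^{-2})$ remainders uniformly. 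All of the substantive work is done by the algebraic structure of $\mathfrak{A}$ and by the fortuitous choice $\eta^* = \theta_1$, which collapses the spectral expansion to only three terms.
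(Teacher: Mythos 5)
Your proposal is correct and follows essentially the same route as the paper's proof: spectrally decompose $A = M + \eta^* K$, invert term by term, observe that $\eta^* = 2n = \theta_1$ cancels the $E_1$ contribution, expand $E_0$ and $E_2$ in the adjacency-matrix basis of the association scheme, and read off the row-sum norm using the degrees $\nu_i$. (Your observation that the $\infty$-norm is \emph{exactly} $\sum_i |c_i|\nu_i$, because $A^{-1}$ is constant on each relation, is a mild sharpening of the paper's appeal to the triangle inequality, but since the relations partition the index pairs the two are in fact identical.) One tiny discrepancy with the paper's display (\ref{leading-terms}): the true coefficient of $A'_3$ is $-\tfrac{1}{18n^3}$, not $0$, as your sign check correctly notes; this is harmless since $|c_3|\nu_3 = O(n^{-2})$ either way.
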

\begin{proof}
Since the $E_i$ and $K$ are orthogonal idempotents for $\Omega(K_{n,n,n})$,
\begin{align}
\nonumber
A^{-1} &=  \theta_0^{-1} E_0 +  \theta_1^{-1} E_1 +  \theta_2^{-1}
E_2 + \eta^{-1} K\\
\label{inverse}
&= \eta^{-1} I + \sum_{j=0}^2 (\theta_j^{-1}-\eta^{-1}) E_j.
\end{align}
Substitute $\eta=\eta^*=2n$ and expressions for $\theta_j$ and $E_j$ from
Propositions~\ref{evals} and \ref{idempotents} into (\ref{inverse}).
Collect coefficients of the $A'_i$ to get
\begin{equation}
\label{leading-terms}
A^{-1} \approx  \frac{1}{n} A'_0 -\frac{1}{2n^2} A'_1 -\frac{4}{9n^3} A'_2
+  0 A'_3 -\frac{1}{18n^3} A'_4,
\end{equation}
where by `$\approx$' we mean that terms of lower degree in $n$ have been suppressed on each coefficient.
Apply the triangle inequality to (\ref{leading-terms}), making use of Proposition~\ref{degrees}, to get
\begin{align*}
||A^{-1} ||_\infty &\le \frac{1}{n} \nu_0
+\frac{1}{2n^2} \nu_1+ \frac{4}{9n^3}\nu_2 + 0 \nu_3 +\frac{1}{18n^3} \nu_4 + \text{lower terms}\\
&= \frac{1}{n} + \frac{2n}{2n^2} +
\frac{4n^2}{9n^3} + \frac{2n^2}{18n^3} + O(n^{-2}) =
\frac{23}{9n}+O(n^{-2}).\qedhere
\end{align*}
\end{proof}
By choice of $\eta^*$, we know that $A=M+ \eta^* K$ is real,
symmetric, and has all its eigenvalues at least $n$.  In particular, $A$ is
positive definite.
We next set up an application of Corollary~\ref{nonneg} to this $A$.

Suppose $G$ is a locally balanced 3-partite graph on $3n$ vertices with
$\delta(G) \ge 2(1-c)n$.  Let $A_G:=M_G+\eta^* K[G]$ as in Section~\ref{sec-sys} and define the perturbation
\begin{equation}
\label{perturb-defn}
\begin{tikzpicture}[scale=0.8,baseline=(current  bounding  box.center)]
\node at (-1,0) {$A+\delta A =~$};
\draw (0,-1.5)--(3.2,-1.5)--(3.2,1.5)--(0,1.5)--(0,-1.5);
\draw (0,-0.5)--(3.2,-0.5);
\draw (2.1,-0.5)--(2.1,1.5);
\node at (2.6,0.5) {$O$};
\node at (1,0.5) {$A_G$};
\node at (1.6,-1) {as in $A$};
\node at (3.4,0) {,};
\end{tikzpicture}
\end{equation}
where rows and columns are organized as edges of $G$ followed by edges of its 3-partite complement.  In particular, we take the `bottom' rows of $A+\delta A$ in this ordering to agree with those of $A$.  With this set-up, a solution of $(A+\delta A) \vx = \vj$ `restricts' to a solution of the smaller system $A_G \vx = \vj$.

The $(e,f)$-entry of $A[G]-A_G = M[G]-M_G$ records the number of triangles in $T(K_{n,n,n})$ which are missing in $T(G)$ and contain $e \cup f$.  Given any edge $e$ of $G$, at most $2cn$ edges of $K_{n,n,n}$ touching $e$ are missing in $G$.  Every triangle missing from $T(G)$ is counted in this way three times.  It follows that we have the bound $||\delta A||_\infty \le 6cn$.  So, already one has the estimate
$||A^{-1}\delta A||_\infty \le \frac{46c}{3}$ using submultiplicativity and Lemma~\ref{inf-norm}.  However, we can obtain a slightly better bound with some more work.

\begin{lemma}
\label{prodnorm}
With $\delta A$ and $c$ as defined above,
$||A^{-1} \delta A||_\infty \le \frac{40c}{3}+ O(n^{-1}).$
\end{lemma}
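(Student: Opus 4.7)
To prove Lemma~\ref{prodnorm} I would compute $||A^{-1}\delta A||_\infty$ by a direct row-by-row analysis, avoiding the loss inherent in the submultiplicative estimate $||A^{-1}||_\infty \cdot ||\delta A||_\infty$. Fix a row index $e$; since $\delta A$ has its rows indexed by $E(K_{n,n,n}) \setminus E(G)$ identically zero, only $g \in E(G)$ contributes to
\[
(A^{-1}\delta A)(e,f) = \sum_{g \in E(G)} A^{-1}(e,g)\,\delta A(g,f).
\]
I would then split the target sum $\sum_f |(A^{-1}\delta A)(e,f)|$ according to whether $f \in E(G)$ or $f \notin E(G)$, and treat each case separately.

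For columns $f \notin E(G)$, the crucial trick is to exploit $A^{-1}A = I$. For $e \ne f$ we have $\sum_g A^{-1}(e,g)\,A(g,f) = 0$, so after substituting $\delta A(g,f) = -A(g,f)$ for $g \in E(G)$, $f \notin E(G)$, one may rewrite
\[
(A^{-1}\delta A)(e,f) = \sum_{g \notin E(G)} A^{-1}(e,g)\,A(g,f) \qquad (f \notin E(G),\ e \ne f).
\]
This identity re-expresses a sum that would otherwise be dominated by large entries of $A$ (namely the $R_1$ entries, which are close to $1$) as a sum over the sparse complement $E(K_{n,n,n}) \setminus E(G)$; both indices $g$ and $f$ now lie in this set, and each association class contains at most an $O(c)$ fraction of its total, which produces the needed savings.

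For columns $f \in E(G)$, the analysis is more elementary: $\delta A(g,f) = (M_G - M[G])(g,f)$ vanishes unless $g = f$ or $(g, f) \in R_3$. In the first case the value is minus the number of missing triangles through $f$; in the second it equals $-1$ precisely when the unique completing triangle $\{g, f, h\}$ has its third edge $h$ missing from $G$. Using the expansion of $A^{-1}$ in the Bose--Mesner algebra from~(\ref{leading-terms}), each coefficient of $A'_i$ multiplies an enumeration of such missing-triangle configurations, weighted according to the class of $(e,g)$. Assembling both parts with the degrees $\nu_i$ from Proposition~\ref{degrees} and the per-vertex bound of $cn$ missing neighbors, I expect the dominant terms to combine to $\frac{40c}{3} + O(n^{-1})$.

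The main obstacle is the bookkeeping: the five association classes interact with each of the two cases, and both endpoints of an edge contribute independently, so tracking the constants cleanly to reach the improved $\frac{40}{3}$ (rather than the $\frac{46}{3}$ of crude submultiplicativity) requires careful accounting and disciplined discarding of subleading-order terms throughout.
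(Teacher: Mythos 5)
You take a different route from the paper. The paper's proof decomposes $\delta A = -(B_0+B_3)$ by association class (a diagonal piece $B_0$ and a piece $B_3$ supported on $A'_3$), then analyzes $(A^{-1}\delta A)\circ A'_h$ for each relation $h$ using the structure constants of the Bose--Mesner algebra, finding sign cancellation in the $h=3$ term that reduces its contribution from $6c$ to $4c$. Your proposal instead splits the column index by membership in $E(G)$ and uses $A^{-1}A = I$ to re-express the $f\notin E(G)$ part as $\sum_{g\notin E(G)}A^{-1}(e,g)A(g,f)$, a genuinely different and rather slick reorganization.

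However, there is a real gap: the constant $\tfrac{40}{3}$ is asserted, not derived. The entire content of the lemma is that factor (the naive submultiplicative bound already gives $\tfrac{46c}{3}$, as the paper observes just before the statement; the lemma exists precisely to shave it to $\tfrac{40c}{3}$). Your writeup defers the key computation to ``careful accounting and disciplined discarding of subleading-order terms,'' and closes with ``I expect the dominant terms to combine to $\tfrac{40c}{3}$,'' which is not an argument. Because your decomposition (by column membership) is different from the paper's (by association class), there is no a priori reason your bookkeeping would produce $\tfrac{40}{3}$ rather than some other constant, possibly worse than $\tfrac{46}{3}$; until the counting is carried out through to the leading coefficients, the lemma has not been proved. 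A secondary point worth flagging: for $e\notin E(G)$ and $f=e$ (both outside $E(G)$), your displayed identity requires the extra term $-\delta_{ef}$, and $(A^{-1}\delta A)(e,e) = -1+\sum_{g\notin E(G)}A^{-1}(e,g)A(g,e)$ is only $O(n^{-1})$ because the $-1$ is cancelled by the dominant $g=e$ summand $A^{-1}(e,e)A(e,e)\approx 1$; this cancellation must be made explicit, or the diagonal entries in such rows look $O(1)$ rather than small.
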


\begin{proof}
Begin by writing $\delta A = -(B_0 + B_3)$, where $B_0$ is a diagonal matrix
containing the main diagonal of $\delta A$.  Since entries of $\delta A$ arise from counting missing triangles in $G$, 
our matrices are integral and in fact satisfy the entrywise inequalities $O \le B_0 \le 2nc I$ and $O \le B_3 \le A'_3$.
Furthermore, a given edge of $G$ has two endpoints from which to join a third associate, 
and up to $2cn$ vertices in the other partite set define a missing triangle with it.  So $B_3$ has at most $4cn$ ones per row and column.

The key observation is that an entry of the product $A'_i B_3$ is simply
a partial count of the structure constants used for the product $A'_i A'_3$.  We
estimate the norm of the product supported on each $A'_h$ and identify some `cancellation'.  In particular, we focus on the term $h=3$.

Let $\circ$ denote entrywise product of matrices of the same shape.  Working from (\ref{leading-terms}), we compute
\begin{equation}
\label{ainv-e}
(A^{-1} \delta A) \circ A'_3 = \frac{1}{n} B_3 - \frac{1}{2n^2} (A_1' B_3) \circ A'_3 - \frac{1}{18n^3} (A'_4 B_3) \circ A'_3.
\end{equation}
Note that since structure constant $a_{23}^3$ vanishes, there is no contribution from the term $A'_2 B_3$.

We bound the rowsum of each $(A'_i B_3) \circ A'_3$ as follows.
Given an edge $e$ of $K_{n,n,n}$, we count the number of ordered pairs
$(f,g)$ of edges such that
\begin{itemize}
\item
$e$ and $f$ are $i$th associates
\item
$e$ and $g$ are 3rd associates; and
\item
$f$ and $g$ are 3rd associates defining a triangle in $K_{n,n,n}$ but not in $G$.
\end{itemize}
Consider $i=1$.  Given $e$, there are at most $2(n-1)$ choices for $f$ and, for each one, at most $2cn$ choices for $g$.  So $||A'_1 B_3 \circ A'_3||_\infty < 4cn^2$.  Now consider $i=4$.  Given $e$, there are at most $2(n-1)$ choices for one vertex of $f$ and subsequently at most $cn$ choices for the second vertex (which also uniquely determines $g$).  Thus $||A'_4 B_3 \circ A'_3||_\infty < 2cn^2$.  Considering again (\ref{ainv-e}) and noting the opposite signs of terms, we have
\begin{equation}
\label{savings}
||(A^{-1} \delta A) \circ A'_3||_\infty \le \max\left\{ \tfrac{1}{n} 4cn, \tfrac{1}{2n^2} 4cn^2+ \tfrac{1}{18n^3} 2cn^2 \right\} \le 4c.
\end{equation}
(This is a savings from $6c+O(n^{-1})$ that would arise from the triangle inequality.)  We did not identify any opposite signs in the expansion of remaining terms.  So there is no loss in estimating the remaining eight terms of $A^{-1}\delta A$ using the triangle inequality and submultiplicativity; this leads to
\begin{equation*}
||A^{-1} \delta A ||_\infty \le \sum_{j=0}^4 ||(A^{-1} \delta A) \circ A'_j||_\infty =
2c + 2c + \frac{10c}{9} + 4c + \frac{38c}{9} + O(n^{-1}) = \frac{40c}{3}+ O(n^{-1}).
\qedhere
\end{equation*}
\end{proof}

Let $c < 3/80$ and let $n$ be large.  Invoke Corollary~\ref{nonneg} with $A$ and $\delta A$ as
described.  The conclusion is that $A+\delta A$ (and hence $A_G$, by construction) is invertible.  Moreover, the solution vector $A_G^{-1} \vj$ is entrywise nonnegative.  By the set-up in Section~\ref{sec-sys}, this vector defines the weights of a fractional fan-decomposition of $G$.  

Taking $c$ approaching $3/80$, we have the decomposition degree threshold $\tau_F(3) \le 77/80 = 0.9625$.  

Now, a small extra improvement is possible following a method of Garaschuk \cite[Chapter 4]{Kseniya}.
First, we inspect the proof of Lemma~\ref{prodnorm} and note that the sum of positive entries in any row of $A^{-1} \delta A$ is at most $28c/3+ O(n^{-1})$.  (Removing the  $A'_0 B_0$ term saves $2c$ and the negative part of (\ref{savings}) saves another $2c+O(n^{-1})$.)  Using an extra term of the series expansion for Proposition~\ref{cond}, we have the entrywise inequality
\begin{equation*}
\vx+\delta \vx = (I+A^{-1} \delta A)^{-1}A^{-1} \vj  \ge \frac{1}{3n} \left( \vj-(A^{-1} \delta A) \vj   - \sum_{i=2}^\infty ||A^{-1} \delta A||_\infty  \vj \right).
\end{equation*}
That is, the solution vector is nonnegative for large $n$ provided 
$$\frac{28c}{3} + \frac{(40c/3)^2}{1-40c/3} < 1,$$
or $c < (\sqrt{409}-17)/80 \lessapprox 0.04$.  So in fact $\tau_F(3) < 0.96$.

\section{Larger cliques and hypergraphs}
\label{sec-generalize}

In this section, we sketch how our method extends to larger cliques and
hypergraphs in the multipartite setting.  Specifically, let $G$ be a
$k$-partite $t$-uniform hypergraph with $n$ vertices in each partite set.
To be clear, edges consist of at most one vertex in each partite set.
Further, suppose $G$ is \emph{locally balanced} in the following sense: any
$t-1$ vertices in distinct partite sets are together in an edge with equally
many vertices in each of the other partite sets.  Finally, we assume these
neighborhoods are close to full: $\delta_{t-1}(G) \ge (1-c) (k-t+1)n$.  We
investigate thresholds on $c$ sufficient for the fractional
$K_k^t$-decomposition of such hypergraphs $G$.

The question for exact decompositions is challenging even for $c=0$.  Let
$K[t,k,n]$ denote the complete balanced $k$-partite $t$-graph on $kn$
vertices.  A $K_k^t$-decomposition of $K[t,k,n]$ is equivalent to an
orthogonal array $OA[t,k,n]$, also known as a `transversal design'.

Before continuing, we offer some clarifying remarks on notation.  In
Section~\ref{sec-intro} and in references \cite{BKLO,BKLOT}, the parameter
$r$ is used for clique size.  Moreover, in \cite{BKLMO}, $k$ is used for
hypergraph rank.  Note the different notation here, which we chose for consistency
with the underlynig coding theory and design theory.  Next, in
Sections~\ref{sec-assoc} and \ref{sec-pf} we primarily used the the second
level of the Hamming lattice.  Here, the treatment is more general and we
express everything in terms of the top level; that is, we work exclusively
in the Hamming scheme $H(k,n)$.

\subsection{Spectral computations in $H(k,n)$}

Let $A_0,A_1,\dots,A_k$  and $E_0,E_1,\dots,E_k$ be the adjacency matrices
and orthogonal idempotents of $H(k,n)$.  They are related via
\begin{equation}
\label{hamming-eigs}
A_i = \sum_{j=0}^k \kappa_i(j) E_j~~\text{and}~~E_j = \frac{1}{n^k}
\sum_{i=0}^k \kappa_j(i) A_i,
\end{equation}
where $$\kappa_i(x) = \sum_{l} (-1)^l (n-1)^{i-l} \binom{k-x}{i-l}
\binom{x}{l}$$ is the \emph{Krawtchouk} polynomial of degree $i$.  See
\cite[Chapter 30]{wvl2}, for instance.

Let $W$ denote the inclusion matrix of the $t$th level of $H(k,n)$ versus
the top level.  Then, as before, $M=WW^\top$ stores in its $(e,f)$-entry the
number of $k$-cliques containing both $e$ and $f$ in $K[t,k,n]$.

\begin{prop}
\label{hyp-evals}
The nonzero eigenvalues of $M$ are $\theta_j =\binom{k-j}{k-t} n^{k-t}$,
with multiplicity $\binom{k}{j} (n-1)^j$,
$j=0,1,\dots,t$.
\end{prop}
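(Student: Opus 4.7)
The plan is to avoid working with $M=WW^\top$ directly (whose rows are indexed by the $t$th level of $\mathcal{H}_{k,n}$, a rather awkward object) and instead study $W^\top W$, which is indexed by the top level and therefore lies naturally in the Bose-Mesner algebra $\mathfrak{A}$ of the Hamming scheme $H(k,n)$. Since $WW^\top$ and $W^\top W$ share the same multiset of nonzero eigenvalues with the same multiplicities, this gives the spectrum of $M$ on its image.

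First I would identify $W^\top W$ as a member of $\mathfrak{A}$. Its $(f,g)$-entry counts the $t$-subwords below both $f$ and $g$; these must be supported on the positions where $f$ and $g$ agree, so if $f,g$ have Hamming distance $i$ there are exactly $\binom{k-i}{t}$ such $t$-subwords. Hence
$$W^\top W \;=\; \sum_{i=0}^{k} \binom{k-i}{t} A_i.$$
Plugging in (\ref{hamming-eigs}) gives $W^\top W = \sum_{j=0}^{k} \mu_j E_j$ with
$$\mu_j \;=\; \sum_{i=0}^{k} \binom{k-i}{t}\, \kappa_i(j).$$

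The main computational step is to evaluate $\mu_j$ in closed form. I would use the generating identity $\sum_i \kappa_i(j) z^i = (1-z)^j (1+(n-1)z)^{k-j}$ together with $\binom{k-i}{t}=[y^t](1+y)^{k-i}$. Setting $z=(1+y)^{-1}$ inside the generating function turns the inner sum into a product, and a short simplification gives
$$\mu_j \;=\; [y^t]\, y^j (n+y)^{k-j}.$$
Reading off the coefficient yields $\mu_j = \binom{k-j}{t-j}\, n^{k-t} = \binom{k-j}{k-t}\, n^{k-t}$ when $j\le t$, and $\mu_j = 0$ when $j>t$. The point is that the factor $y^j$ forces $j \le t$ for a nonzero contribution, which explains why exactly the first $t+1$ eigenspaces survive.

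Finally I would conclude: the nonzero eigenvalues of $W^\top W$ are $\theta_j=\binom{k-j}{k-t}n^{k-t}$ for $j=0,1,\dots,t$, and the corresponding multiplicities are $\operatorname{rank}(E_j) = \kappa_j(0) = \binom{k}{j}(n-1)^j$. Since $M=WW^\top$ and $W^\top W$ have the same nonzero spectrum (with multiplicities), this is exactly the statement of the proposition. The only substantive obstacle is the generating-function manipulation for $\mu_j$; the remaining steps are either the standard $WW^\top$/$W^\top W$ duality or direct counting in the Hamming lattice.
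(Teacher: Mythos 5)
Your proof is correct and takes essentially the same approach as the paper: pass from $M = WW^\top$ to $W^\top W = \sum_i \binom{k-i}{t}A_i$, which lies in the Bose--Mesner algebra of $H(k,n)$, expand in the idempotent basis to get $\theta_j=\sum_i\binom{k-i}{t}\kappa_i(j)$, and simplify via the Krawtchouk generating function. The only difference is cosmetic: the paper convolves against $\sum_i\binom{t+i}{t}X^i$ to read off $[X^{k-t}]\,(1+(n-1)X)^{k-j}(1-X)^{j-t-1}$ and then expands the negative binomial, whereas your substitution $z=(1+y)^{-1}$ yields $[y^t]\,y^j(n+y)^{k-j}$ directly, making the vanishing for $j>t$ visible at a glance from the $y^j$ factor rather than from a degree count.
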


\begin{proof}
Instead of $M=WW^\top$, it suffices to compute the nonzero eigenvalues of
\begin{equation*}
W^\top W = \sum_{i=0}^k \binom{k-i}{t} A_i = \sum_{j=0}^k E_j
\sum_{i=0}^{k-t} \binom{k-i}{t} \kappa_i(j).
\end{equation*}
Recall that the $E_j$ are orthogonal idempotents.  It follows that
eigenvalues are given by the inner sum, call it $\theta_j$, with
corresponding multiplicities
$\text{rank}(E_j) = \binom{k}{j}(n-1)^j$.
It remains to simplify $\theta_j$.

The value $\kappa_i(j)$ is the coefficient of $X^{i}$ in
$(1+(n-1)X)^{k-i}(1-X)^{j}$.
So, our sum equals the coefficient of $X^{k-t}$ in
\begin{equation}
\label{gen-function}
(1+(n-1)X)^{k-j}(1-X)^j \sum \binom{t+i}{t}
X^i=(1+(n-1)X)^{k-j}(1-X)^{j-t-1}.
\end{equation}
For $j>t$, (\ref{gen-function}) is a polynomial of degree $k-t-1$, and so the
coefficient of $X^{k-t}$ vanishes.  For $0 \le j \le t$, we compute
\begin{align*}
\theta_j &= \sum_{l=0}^{k-t} (-1)^{l} (n-1)^{k-t-l} \binom{k-j}{k-t-l}
\binom{j-t-1}{l}  \\
&= \sum_{l=0}^{k-t} (n-1)^{k-t-l} \binom{k-j}{t-j+l} \binom{t-j+l}{l} \\
&= \binom{k-j}{k-t} \sum_{l=0}^{k-t} (n-1)^{l} \binom{k-t}{l} =
\binom{k-j}{k-t} n^{k-t}. \qedhere
\end{align*}
\end{proof}

Next, consider $E_j':=\theta_j^{-1} WE_jW^\top$.  Since $ME_j' =
\theta_j^{-1} W(W^\top W E_j) W^\top =   WE_j W^\top = \theta_j E_j'$ and
$$(E_j')^2 = \theta_j^{-2} WE_j W^\top W E_j W^\top = \theta_j^{-1} WE_j^2
W^\top = E_j',$$
it follows that $E_j'$ is projection onto the eigenspace of $M$
corresponding to eigenvalue $\theta_j$.

We now compute, using  (\ref{hamming-eigs}),
\begin{align}
\nonumber
(M+\eta K)^{-1} &=\sum_{j=0}^t \frac{1}{\theta_j} E_j' + \frac{1}{\eta}
\left( I-\sum_{j=0}^t E_j' \right) \\
\nonumber
 &= \frac{1}{\eta} I + \sum_{j=0}^t \frac{1}{\theta_j} \left(
\frac{1}{\theta_j} - \frac{1}{\eta} \right) WE_jW^\top\\
\label{hyp-inverse}
&= \frac{1}{\eta} I + \frac{1}{n^k} \sum_{i=0}^k \sum_{j=0}^t
\frac{1}{\theta_j} \left( \frac{1}{\theta_j} - \frac{1}{\eta} \right)
\kappa_j(i) WA_iW^\top.
\end{align}

To go further, one must study the matrices $WA_i W^\top$, $i=0,1,\dots,k$.
It is easy to see that, for edges $e$ and $f$,
the $(e,f)$-entry of $WA_i W^\top$ equals the number of ordered pairs $(a,b)
\in [n]^k \times [n]^k$ such that $a$ extends $e$, $b$ extends $f$, and
where $a$ and $b$ are at Hamming distance $i$ in $H(k,n)$.

\begin{prop}
\label{crs-waw}
The constant row sum of $WA_iW^\top$ is
$\begin{displaystyle}\binom{k}{t} \binom{k}{i} n^{k-t} (n-1)^i
\end{displaystyle}.$
\end{prop}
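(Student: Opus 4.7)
The plan is to interpret the row sum combinatorially and exploit a natural factorization that arises when we sum in a convenient order. Fix an edge $e$ of rank $t$. Using the description just before the proposition, the $e$-th row sum of $WA_iW^\top$ counts ordered triples $(a,b,f)$ where $a,b \in [n]^k$ lie at Hamming distance exactly $i$ in $H(k,n)$, $a$ extends $e$, and $f$ is a rank-$t$ subword extended by $b$.

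I would then sum in the order $f$, then $b$, then $a$. For the innermost sum, each $b \in [n]^k$ extends exactly $\binom{k}{t}$ rank-$t$ subwords, since such a subword is specified by choosing which $t$ coordinates of $b$ to retain; this contributes a factor $\binom{k}{t}$ that is independent of $b$. Next, for each fixed $a$ the number of $b$ at Hamming distance exactly $i$ equals the degree of $R_i$ in $H(k,n)$, namely $\binom{k}{i}(n-1)^i$, again independent of $a$. Finally, summing over $a$ extending $e$ gives a factor of $n^{k-t}$, since $e$ has $k-t$ star positions and each may be filled freely in $[n]$.

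Multiplying these three independent factors yields
\[
\binom{k}{t}\binom{k}{i} n^{k-t} (n-1)^i,
\]
and since none of the three depends on the specific $e$ beyond its having rank $t$, the row sum is genuinely constant, which is the assertion. There is no real obstacle here; the calculation is a direct switch-of-summation-order argument, and all sums involved are finite sums of nonnegative integers so no convergence or regrouping issue arises. The same independence of $e$ explains why the authors can cleanly substitute this value into formula~\eqref{hyp-inverse} in the subsequent norm estimates.
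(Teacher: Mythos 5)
Your proof is correct and takes essentially the same approach as the paper: both count the triples $(a,b,f)$ by the same three independent factors $n^{k-t}$, $\binom{k}{i}(n-1)^i$, and $\binom{k}{t}$, just with the order of presentation reversed. Your version is slightly more explicit about the combinatorial interpretation of the row sum, but there is no substantive difference in the argument.
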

\begin{proof}
There are $n^{k-t}$ extensions of $e$ to a $k$-tuple $a$.  For each such
extension, there are $\binom{k}{i} (n-1)^{i}$ choices for a tuple $b$ at
Hamming distance $i$.  Finally, there are $\binom{k}{t}$ restrictions of $b$
to an edge $f$.
\end{proof}

\subsection{Estimates for $t=2$}

Here, we consider the graph case with general clique size and sketch a norm
bound.  We begin with a more detailed version of the counting argument in
Proposition~\ref{crs-waw}.  The second level of $\mathcal{H}_{k,n}$ has six
relations (and corresponding adjacency matrices): identical edges
$(A'_0=I)$,  adjacent unequal edges in the same pair of partite classes
$(A'_1)$, disjoint edges in the same pair of partite classes $(A'_2)$,
adjacent edges touching exactly three partite classes $(A'_3)$, disjoint
edges touching exactly three partite classes $(A'_4)$, and disjoint edges
touching four partite classes $(A'_5)$.  Refer to Figure~\ref{rel-labels}.

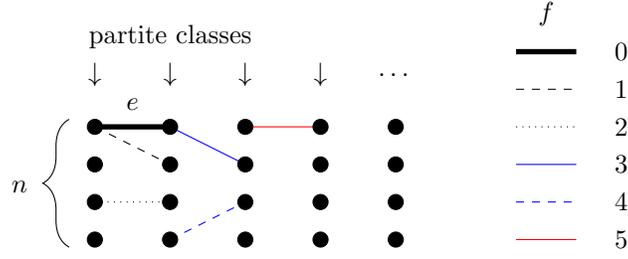
\begin{figure}[htpb]
\begin{tikzpicture}
 \draw[line width=2pt] (0,1.5)--(1,1.5);
\draw[dashed] (0,1.5)--(1,1);
\draw[dotted] (0,.5)--(1,.5);
\draw[blue] (1,1.5)--(2,1);
\draw[dashed, blue] (1,0)--(2,.5);
\draw[red] (2,1.5)--(3,1.5);

\node at (1,2.7) {partite classes};
\node at (0,2.2) {$\downarrow$};
\node at (1,2.2) {$\downarrow$};
\node at (2,2.2) {$\downarrow$};
\node at (3,2.2) {$\downarrow$};
\node at (4,2.2) {$\dots$};

\draw [decorate,decoration={brace,amplitude=10pt},xshift=-4pt,yshift=0pt]
(-.2,-.1) -- (-.2,1.6);

\node at (-1,.7) {$n$};
\node at (0.5,1.8) {$e$};

\filldraw (0,0) circle [radius=.1];
\filldraw (0,0.5) circle [radius=.1];
\filldraw (0,1) circle [radius=.1];
\filldraw (0,1.5) circle [radius=.1];

\filldraw (1,0) circle [radius=.1];
\filldraw (1,0.5) circle [radius=.1];
\filldraw (1,1) circle [radius=.1];
\filldraw (1,1.5) circle [radius=.1];

\filldraw (2,0) circle [radius=.1];
\filldraw (2,0.5) circle [radius=.1];
\filldraw (2,1) circle [radius=.1];
\filldraw (2,1.5) circle [radius=.1];

\filldraw (3,0) circle [radius=.1];
\filldraw (3,0.5) circle [radius=.1];
\filldraw (3,1) circle [radius=.1];
\filldraw (3,1.5) circle [radius=.1];

\filldraw (4,0) circle [radius=.1];
\filldraw (4,0.5) circle [radius=.1];
\filldraw (4,1) circle [radius=.1];
\filldraw (4,1.5) circle [radius=.1];

\node at (6,3) {$f$};

\draw[line width=2pt] (5.6,2.5)--(6.4,2.5);
\draw[dashed] (5.6,2)--(6.4,2);
\draw[dotted] (5.6,1.5)--(6.4,1.5);
\draw[blue] (5.6,1)--(6.4,1);
\draw[dashed, blue] (5.6,0.5)--(6.4,0.5);
\draw[red]  (5.6,0)--(6.4,0);

\node at (7,2.5) {$0$};
\node at (7,2) {$1$};
\node at (7,1.5) {$2$};
\node at (7,1) {$3$};
\node at (7,.5) {$4$};
\node at (7,0) {$5$};
\end{tikzpicture}

\caption{Relation labels for edges in the second level of
$\mathcal{H}_{k,n}$}
\label{rel-labels}
\end{figure}

In what follows, define
$$F_s(h,i) := n^h (n-1)^i \sum_{l=0}^{2s} \binom{h}{i-l} \binom{2s}{l} =
\binom{h+2s}{i} n^{h+i} + O(n^{h+i-1}).$$

\begin{prop}
\begin{align*}
WA_{i}W^\top &= F_0(k-2,i) A'_{0} + F_0(k-2,i-1) A'_{1} + F_0(k-2,i-2)A'_{2}
\\
& + F_1(k-3,i) A'_3 + F_1(k-3,i-1) A'_4 + F_2(k-4,i) A'_5.
\end{align*}
\end{prop}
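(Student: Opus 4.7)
The plan is to compute the $(e,f)$-entry of $WA_iW^\top$ directly from its combinatorial description and then read off the coefficient of each $A'_j$. As noted before the statement, the $(e,f)$-entry counts ordered pairs $(a,b)\in[n]^k\times[n]^k$ with $a$ extending the edge $e$, $b$ extending $f$, and $d_H(a,b)=i$. Since this count depends only on the relation of $e$ and $f$, it suffices to handle the six cases $A'_0,\dots,A'_5$ once each.

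For a fixed pair $(e,f)$, write $P_e,P_f\subseteq[k]$ for the position sets and partition $[k]$ into three blocks: the intersection $P_e\cap P_f$, the symmetric difference $P_e\triangle P_f$, and the complement of $P_e\cup P_f$. On $P_e\cap P_f$ both $a$ and $b$ are forced by the edges, so this block contributes a constant $c\in\{0,1,2\}$ to $d_H(a,b)$ (the number of positions where $v_e$ and $v_f$ disagree). On each position of $P_e\triangle P_f$ exactly one of $a_p,b_p$ is forced and the other is free, contributing distance $0$ in $1$ way and distance $1$ in $n-1$ ways. On each of the $h:=k-|P_e\cup P_f|$ free positions, $(a_p,b_p)$ is unconstrained, contributing distance $0$ in $n$ ways and distance $1$ in $n(n-1)$ ways.

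Summing over the number $m$ of disagreements from $P_e\triangle P_f$ and $j$ of disagreements from the free block, with $c+m+j=i$, I get
\[
\sum_{m+j=i-c}\binom{|P_e\triangle P_f|}{m}(n-1)^{m}\cdot\binom{h}{j}n^{h}(n-1)^{j}
=n^{h}(n-1)^{i-c}\sum_{l=0}^{2s}\binom{2s}{l}\binom{h}{i-c-l},
\]
where $2s:=|P_e\triangle P_f|$. This is precisely $F_s(h,i-c)$. Finally I tabulate the three parameters $(s,h,c)$ per relation: $A'_0$ gives $(0,k-2,0)$; $A'_1$ gives $(0,k-2,1)$; $A'_2$ gives $(0,k-2,2)$; $A'_3$ gives $(1,k-3,0)$; $A'_4$ gives $(1,k-3,1)$; and $A'_5$ gives $(2,k-4,0)$. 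Substituting yields the six stated coefficients.

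The only nontrivial step is correctly identifying $(s,h,c)$ for relations $A'_3$ and $A'_4$, where $|P_e\cap P_f|=1$ and one must remember that the intersection positions contribute $c=0$ or $c=1$ according to whether the edges share that vertex, and that the symmetric difference has size $2$, hence $2s=2$. Everything else is bookkeeping; no Vandermonde identity is needed because the sum $\sum_l\binom{2s}{l}\binom{h}{i-c-l}$ is already baked into the definition of $F_s$.
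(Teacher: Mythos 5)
Your proof is correct, and it is essentially the same combinatorial counting argument as the paper's: both enumerate ordered pairs $(a,b)$ extending $e,f$ at Hamming distance $i$ by splitting positions according to whether they lie in both, one, or neither of the edges' supports. You organize this slightly more uniformly — extracting the single parameter triple $(s,h,c)=(\lvert P_e\triangle P_f\rvert/2,\,k-\lvert P_e\cup P_f\rvert,\,\text{forced disagreements})$ and deriving one formula $F_s(h,i-c)$ that specializes to all six relations — whereas the paper treats the relations in three blocks with a slightly more ad hoc case split for $A'_3$ and $A'_4$, but the underlying count and the role of the Vandermonde-type sum baked into $F_s$ are identical.
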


\begin{proof}
Consider the $(e,f)$-entry of $WA_iW^\top$, where $e$ and $f$ are $j$th
associates, $j=0,1,2$.  In this case, $e \cup f$ touches only two partite
classes, so there are $n^{k-2}$ choices for an extension $a$ of $e$ to a
$k$-tuple.  Next, choose which $i-j$ of the $k-2$ newly added vertices to
change in an extension $b$ of $f$ to a $k$-tuple at Hamming distance $i$ from $a$.
Finally, choose any of the other $n-1$ vertices in each corresponding
partite set.  The total count is $n^{k-2} (n-1)^{i-j} \binom{k-2}{i-j} =
F_0(k-2,i-j)$.

Now consider the case in which $e$ and $f$ are $3$rd associates, say
$e=\{u,v\}$ and $f=\{v,w\}$, where $u,v,w$ are in distinct partite classes.
The counting is similar as above, but divides into cases according to which of
$u,v,w$ are common to both extensions $a$ of $e$ and $b$ of $f$.  If $a$ and
$b$ agree on all three points, there is a choice of $i$ other partite sets for disagreements, leading to a count of 
$n^{k-3} (n-1)^i \binom{k-3}{i}$.  If they agree on $v$ and exactly one of $u,w$, there are only $i-1$ other disagreements 
and the binomial coefficient changes accordingly.  If they agree only on $v$, there are $i-2$ other disagreements.  The total count is $F_1(k-3,i)$, and a
very similar argument obtains $F_1(k-3,i-1)$ for $4$th associates.

Finally, suppose $e$ and $f$ are $5$th associates, meaning $e \cup f$
touches four distinct partite classes.  If $a$ and $b$ disagree on exactly
$l$ of these four partite classes, there are $n^{k-4} (n-1)^i
\binom{k-4}{i-l}$.  Summing over the possible cases for disagreements, we
obtain $F_2(k-4,i)$.
\end{proof}

Now, we work from (\ref{hyp-inverse}) and begin by analyzing degrees of the
polynomial terms in $n$.  The dominant terms occur for $i+j \ge k$, of which
there are only six pairs $(i,j)$.  Moreover, similar to
Section~\ref{sec-pf}, we put $A= M+ \eta^* K$, where $\eta^* = \theta_1$.  This causes
the terms for $j=1$ to vanish, leaving only four remaining terms in the cases
$$(i,j) \in \{(k,0),(k,2),(k-1,2),(k-2,2)\}.$$
Substitute $\kappa_0(k) = 1$, $\kappa_2(k) = \binom{k}{2}$, $\kappa_2(k-1) =
(1-k)n+O(1)$,  $\kappa_2(k-2) = n^2 + O(n)$, $\theta_0=\binom{k}{2}
n^{k-2}$, $\theta_1 = (k-1)n^{k-2}$, $\theta_2=n^{k-2}$, and collect
dominant terms of the coefficients of the $A'_h$.
After some calculations and the triangle inequality, we have
\begin{align*}
||A^{-1}||_\infty
& \lessapprox n^{2-k} \nu_0 + \tfrac{k-2}{k-1} n^{1-k}\nu_1  +
\left(\tfrac{k-2}{k-1} -d_k \right) n^{-k}\nu_2 + 0\nu_3 + d_k n^{-k} \nu_4
+ d_k n^{-k}\nu_5
\end{align*}
plus terms of lower order,
where $d_k := \frac{k-2}{2\binom{k}{2}^2}$, and $\nu_i$ is the row sum of $A'_i$.
Finally, substitute $\nu_0 = 1$, $\nu_1 =2(n-1)$, $\nu_2=(n-1)^2$,
$\nu_3=2(k-2)n$, $\nu_4=2(k-2)n(n-1)$, and $\nu_5=\binom{k-2}{2}n^2$ to
obtain
\begin{equation}
\label{norm-bound-cliques}
||A^{-1}||_\infty  \le \left(4-\frac{k^3+k-4}{2 \binom{k}{2}^2}
\right) n^{2-k} + O(n^{1-k}).
\end{equation}
In the case $k=3$, note that the formula for $\nu_5$ vanishes.  So the same
formula recovers the leading coefficient $23/9$ from Section~\ref{sec-pf}.
Leading coefficients for more small values of $k$ are given in
Table~\ref{coeffs-smallk}.
\begin{table}[hpbt]
$$\begin{array}{r|cccc}
k & 3 & 4 & 5 & 6\\
\hline \\[-8pt]
\text{leading coeff} & \dfrac{23}{9} & \dfrac{28}{9} & \dfrac{337}{100} &
\dfrac{791}{225}
\end{array}$$
\medskip
\caption{Leading coefficients of $||A^{-1}||_\infty$ for small $k$}
\label{coeffs-smallk}
\end{table}

As in the case of triangles, the matrix $A_G$ we wish to consider is close
to the restriction $A[G]$ of $A$. Assume $\delta(G) \ge (1-c)(k-1)n$ and set up the perturbation $A+\delta A$ as in (\ref{perturb-defn}).  By counting missing cliques as in Section~\ref{sec-pf}, 
\begin{equation}
\label{hyper-perturbnorm}
||\delta A||_\infty \le ||M[G]-M_G||_\infty < c \binom{k}{2}^2 n^{k-2} +  O(n^{k-3}).
\end{equation}
After (the submultiplicativity variant of) Corollary~\ref{nonneg}, we obtain a fractional fan decomposition threshold $\tau_F(k)
\lessapprox 1-1/2k^4$.  For large $k$, a better bound has been
obtained by Montgomery in \cite{Montgomery}.  However, our method leads to
reasonably good thresholds for small $k$.  In the case $k=4$, for instance, every edge
belongs to at most $4cn^2$ missing cliques in $G$.  This leads to an error norm of $24cn^2$.  Together with
the entry from Table~\ref{coeffs-smallk}, we get $\tau_F(4) \le 1-1/(2 \cdot 24 \cdot 28/9) = 445/448$.
When used in conjunction with Theorem~\ref{integralize}, this gives a
result on completion of partially filled orthogonal latin squares.  We omit the extra small improvements that were obtained for triangles in Section~\ref{sec-pf}.

\subsection{Estimates for general $t$}
\label{sec-hyper}

In the case $t=2$, we estimated the norm of $(M+\eta K)^{-1}$ by expanding
each $WA_iW^\top$ in the second level of $\mathcal{H}_{k,n}$.  Such an
expansion becomes more involved for $t>2$.  However, it is possible to get a
crude bound working from (\ref{hyp-inverse}) using only Proposition~\ref{crs-waw} and the triangle inequality.  We have
\begin{equation}
\label{hyper-opteta}
||(M+\eta K)^{-1}||_\infty \le \frac{1}{\eta} + \frac{1}{n^t} \binom{k}{t}
\sum_{i=0}^k \binom{k}{i} (n-1)^i \left| \sum_{j=0}^t \frac{1}{\theta_j}
\left( \frac{1}{\theta_j} - \frac{1}{\eta} \right)  \kappa_j(i) \right|.
\end{equation}
Optimizing the inner sum over $\eta$ is tricky, but it simplifies for large
$\eta$ as
\begin{align*}
\lim_{\eta \rightarrow \infty} ||(M+\eta K)^{-1}||_\infty &\le \frac{1}{n^t}
\binom{k}{t} \sum_{i=0}^k \binom{k}{i} (n-1)^i \left| \sum_{j=0}^t
\frac{\kappa_j(i)}{\theta_j^2} \right|\\
&\le \frac{1}{n^t} \binom{k}{t} \sum_{j=0}^t \sum_{i=0}^k \binom{k}{i}
(n-1)^i \theta_j^{-2} |\kappa_j(i)|.
\end{align*}
Observe that $\kappa_j(i)$ is a polynomial of degree $\min\{j,k-i\}$ in
$n$.  It follows that the dominant terms on the right occur when $i=k-j$.
Substituting $\kappa_j(k-j)=n^j+O(n^{j-1})$ and for  $\theta_j$ using
Proposition~\ref{hyp-evals}, 
\begin{align}
\nonumber
\lim_{\eta \rightarrow \infty} ||(M+\eta K)^{-1}||_\infty
&\le \frac{1}{n^t} \binom{k}{t} \sum_{j=0}^t \frac{\binom{k}{k-j}
(n-1)^{k-j} [n^j +O(n^{j-1})]}{\binom{k-j}{k-t}^2 n^{2k-2t}} \\
\nonumber
&\le n^{t-k} \binom{k}{t} \sum_{j=0}^t \binom{k}{j} \binom{k-j}{k-t}^{-2}  +
O(n^{t-k-1})\\
\label{drop-term}
&\le    2^t \binom{k}{t}^2 n^{t-k} +  O(n^{t-k-1}).
\end{align}
Note that (\ref{drop-term}) equals the line above when the exponent in the
sum is changed from $-2$ to $1$ (after some binomial identities are
applied).  For given specific $k$ and $t$, it is not difficult to compute a better
constant.  In any case, there exists $C(t)$ so that, for some $\eta^*$,
\begin{equation}
\label{hyper-infnorm}
||(M+\eta^* K)^{-1}||_\infty < C(t) \binom{k}{t}^2 \frac{1}{n^{k-t}} +
O\left(\tfrac{1}{n^{k-t+1}}\right).
\end{equation}
Under the assumption $\delta_{t-1}(G) \ge (1-c)(k-t+1)n$, it is not
difficult to imitate \cite[Proposition 3.3]{Dukes-frac} and obtain
\begin{equation}
\label{hyper-perturbnorm}
||M[G]-M_G||_\infty < c \binom{k}{t}^2 n^{k-t} +  O(n^{k-t+1}).
\end{equation}
From (\ref{hyper-infnorm}), (\ref{hyper-perturbnorm}) and Corollary~\ref{nonneg}, one obtains a
threshold on the allowed missing degree proportion $c$ which is of the order
$k^{-4t}$.  In many cases, it may be possible to do better, especially if
a bound before (\ref{drop-term}) is computed.  Even
still, this threshold is likely to be significantly improved through other
methods.  For this reason, we omit a detailed
treatment in the general setting.  Besides, the stakes are lower since there is
presently no analog of Theorem~\ref{integralize} for hypergraphs.

\section{Discussion}
\label{sec-discuss} 

In spite of the interesting algebra connected with our matrix for $K_{n,n,n}$, the approximation via linear perturbation 
probably incurs considerable loss.  On the other hand, it is noteworthy that this method still delivers a reasonable threshold guaranteeing a decomposition.  And, in practice, simply solving our linear system (\ref{system-fan}) stands a good chance at giving a fractional decomposition, even if the guarantee is not met.

By contrast, the methods in \cite{BKLMO,Dross,Montgomery} use
local adjustments to an initial constant weighting of cliques.  For example,
the paper \cite{BKLMO} of Barber, K\"uhn, Lo, Montgomery and Osthus, which studies fractional decomposition of 
dense graphs and hypergraphs, uses the fact that an edge $e$ can be expressed as a linear combination of the $r$-cliques inside of an $(r+2)$-clique containing $e$.  Averaging over many such $(r+2)$-cliques, the authors obtain an `edge
gadget' which adjusts the weight of $e$ via a minor change to the weighting
of cliques.  In the $r$-partite setting, it is not immediately clear how to construct gadgets.  However,
Montgomery overcomes this challenge in the recent paper \cite{Montgomery}.  There, 
$\tau_F(r) \le 1-10^{-6}r^{-3}$ is obtained for the general $r$-partite setting.  
This improves on our exponent by one, but the small constant illustrates the extra difficulty with local adjustments in this setting.

One interesting feature common to most work on fractional decomposition is that results are stated in terms of minimum (vertex or co-) degree.  However, perhaps a more natural hypothesis is the (slightly weaker) condition that every edge belong to many cliques.  In general, it would be of interest to explore decompositions under different hypotheses.

A generalization we have not considered is the (fractional)
$K_s$-decomposition of complete $r$-partite graphs for $s \le r$.  Of
course, given a $K_r$-decomposition, it is possible to replace each
$r$-clique with a scaled average of $s$-cliques, but then the minimum degree
threshold will depend on $r$ rather than $s$.  When $r \ge s+2$, Montgomery
has observed that the gadget technique of \cite{BKLMO} can produce
reasonable thresholds for this problem that depend on $s$.

For hypergraphs in the partite setting, the problem is still in early stages.  
Our outline in Section~\ref{sec-hyper} offers a starting point for this problem.

In the same way that triangle decomposition of $3$-partite graphs models
completion of partial latin squares, the decomposition problem for general
clique size leads to mutually orthogonal latin squares (MOLS).  See \cite[\S
3.2]{BKLOT} for more discussion on this.  As one very special case of this
problem, the main result of \cite{Dukes-VB} shows that there exists $r$ MOLS
of order $n$ missing a `hole' of order $m$ for all large $n$ and $m$ satisfying $n
\ge 8(r+1)^2 m$.  This corresponds with missing degree proportion of order
$r^{-2}$.  An improvement to order $r^{-1}$ in this special case would be of
some interest for design theorists.

We close with some remarks on convex-geometric barriers for our problem.
A locally balanced 3-partite graph on $3n$ vertices admits a fractional triangle decomposition if and only if it belongs to the cone of weighted graphs generated by triangles in $K_{n,n,n}$.  The facet structure of this cone (its description by inequalities) is perhaps of some interest in its own right.  For instance, in the case $n=2$, a weighted graph belongs to the cone only if twice the sum of edge weights on two disjoint triangles exceeds the sum of edge weights crossing between them.  This inequality defines one of 16 distinct facets of the cone for $n=2$.  We have computed $207$ distinct facets for $n=3$, arising from four isomorphism classes, and $113740$ distinct facets for $n=4$, falling into $15$ isomorphism classes.  More precisely, these classes are orbits under the action of $\text{Aut}(K_{n,n,n}) = \mathcal{S}_n \wr \mathcal{S}_3$.  It is clear from these experiments that the cone is very complex; however, even a partial description may lead to a better understanding of geometric barriers for the fractional decomposition problem.

\section*{Acknowledgments}

We are grateful to the authors of \cite{BKLOT} and \cite{Montgomery} for providing helpful remarks and updates on their recent
work.

\end{document}